\documentclass[10pt]{amsart}
\pdfoutput=1 

\usepackage{amsmath,amsthm,amssymb}
\usepackage[mathscr]{eucal}
 \usepackage{cite}
\usepackage{upgreek}
\usepackage[bookmarks=false]{hyperref}
\usepackage{enumerate}
\usepackage{mathrsfs}
\usepackage{blindtext}
\usepackage{scrextend}
\usepackage{bm} 
\addtokomafont{labelinglabel}{\sffamily}
\usepackage{color}
\usepackage[at]{easylist}
\usepackage{tikz}
\usepackage{svg}
\usepackage{graphicx}
\usepackage{mathtools}

\usepackage{comment}

\numberwithin{equation}{section}

\newtheorem{main}{Theorem}

\newtheorem{mcor}[main]{Corollary}

\newtheorem{thm}{Theorem}[section]
\newtheorem*{thm*}{Theorem}
\newtheorem{lem}[thm]{Lemma}
\newtheorem*{prob*}{Problem}

\newtheorem{prop}[thm]{Proposition}
\newtheorem*{prop*}{Proposition}

\newtheorem*{cor*}{Corollary}

\theoremstyle{definition}
\newtheorem{defn}[thm]{Definition}
\newtheorem*{defn*}{Definition}
\newtheorem{notation}[thm]{Notation}
\newtheorem{remark}[thm]{Remark}

\newtheorem*{question*}{Question}
\newtheorem*{Pquestion*}{Popa's question}

\newtheorem*{conv*}{Convention}

\newcommand{\N}{\mathbb{N}}

\newcommand{\C}{\mathbb{C}}
\newcommand{\Z}{\mathbb{Z}}

\newcommand{\F}{\mathbb{F}}

\newcommand{\cH}{\mathcal{H}}
\newcommand{\cI}{\mathcal{I}}

\newcommand{\cK}{\mathcal{K}}
\newcommand{\cM}{\mathcal{M}}

\newcommand{\cU}{\mathcal{U}}

\newcommand{\id}{\operatorname{id}}

\newcommand{\ootimes}{\mathbin{\overline{\otimes}}}

\DeclarePairedDelimiter{\ip}{\langle}{\rangle}

\begin{document}

\title[Exotic full factors via weakly coarse bimodules]
{Exotic full factors via weakly coarse bimodules}

\author[D. Gao]{David Gao}
\address{Department of Mathematical Sciences, UCSD, 9500 Gilman Dr, La Jolla, CA 92092, USA}\email{weg002@ucsd.edu}\urladdr{https://sites.google.com/ucsd.edu/david-gao}

\author[D. Jekel]{David Jekel}
\address{Department of Mathematical Sciences, University of Copenhagen, Universitetsparken 5, 2100 Copenhagen {\O}, Denmark}
\email{daj@math.ku.dk}
\urladdr{https://davidjekel.com}

\author[S. Kunnawalkam Elayavalli]{Srivatsav Kunnawalkam Elayavalli}
\address{Department of Mathematics, UMD, Kirwan Hall, Campus Drive, MD 20770, USA}\email{sriva@umd.edu}
\urladdr{https://sites.google.com/view/srivatsavke/home}

\author[G. Patchell]{Gregory Patchell}
\address{\parbox{\linewidth}{Mathematical Institute, University of Oxford, Andrew Wiles Building, \\ Radcliffe Observatory Quarter, Woodstock Road, Oxford, OX2 6GG, UK}}
\email{greg.patchell@maths.ox.ac.uk}
\urladdr{https://sites.google.com/view/gpatchel}

\begin{abstract}
        We are able to explicitly compute the bimodule structure of von Neumann algebra inclusions in \emph{handle constructions}, which arise as inductive limits of iterated amalgamated free products not elementarily equivalent to $L(\mathbb{F}_2)$. Our computation is achieved via identifying delicate normal form decompositions in amalgamated free products built in an iterated fashion. Using these techniques, we are able to show that the handles constructions are always full, without any need to appeal to Property (T) phenomena which was essential in all previous works. Furthermore our bimodule machinery works in the setting of arbitrary von Neumann algebras equipped with faithful normal states, yielding examples of full I$\mathrm{II}_1$ factors via handle constructions.
        
\end{abstract}

\maketitle

\section{Introduction}

The study of ultrapowers of von Neumann algebras goes back to the very early days of the field, and was a key ingredient in many landmark results \cite{DiLa, ZM, McD1, McD2, Connes}.  This naturally gave rise to the problem of determining when two von Neumann algebras have isomorphic ultrapowers, which is also known as \emph{elementary equivalence} (see \cite{FHS}).  While determining whether two von Neumann algebras are isomorphic is already a notoriously difficult problem, the same tools often cannot be used to assess the much coarser relation of elementary equivalence.  Distinguishing von Neumann algebras up to elementary equivalence requires new more robust invariants, and several such involving the central sequence algebra have been studied in \cite{BCI15, GH, goldbring2023uniformly}.  Much less is known about elementary equivalence for $\mathrm{II}_1$ factors that are \emph{full} or \emph{non-Gamma}, or do not admit nontrivial central sequences.  The first construction of full $\mathrm{II}_1$ factors that have been proven non-elementarily equivalent was given in \cite{CIKE23}.  The distinguishing invariant in this case is the \emph{sequential commutation diameter}; the framework of sequential commutation was developed in \cite{elayavalli2023sequential} (see also \cite{gao2024internalsequentialcommutationsingle}).

Fix a $\mathrm{II}_1$ factor $M$ and let $M^{\mathcal{U}}$ be its ultrapower.  Consider the graph whose vertices are the Haar unitaries in $M^{\mathcal{U}}$ (i.e.\ unitaries whose spectral distribution is uniform on the circle), where $u$ and $v$ are adjacent if $uv = vu$.  The connected components of this graph are \emph{sequential commutation orbits} and the supremum of the distances between two vertices $u$ and $v$ is the \emph{sequential commutation diameter} (this is also $+\infty$ by convention if the graph is disconnected).  For example, the sequential commutation diameter is $2$ if and only if $M$ has property Gamma (see \cite[Corollary 4.11]{shen2019reducible} and \cite[Theorem 4.2]{elayavalli2023sequential}).  A completely opposite case is the free group von Neumann algebra $L(\mathbb{F}_n)$, for which the commutation graph is disconnected, since the $n$ different generators of $\mathbb{F}_n$ are in different sequential commutation orbits \cite{Hayes8, jekel2024upgradedfreeindependencephenomena} (see also \cite{houdayer2023asymptotic} which proves the non-existence of commutation paths of length 3 on the generators).

The strategy of \cite{CIKE23} was to construct a $\mathrm{II}_1$ factor with commutation diameter exactly $3$; equivalently, any two unitaries can be connected by a path of length $3$ in the commutation graph and $M$ is full.  An upper bound on the commutation diameter can be arranged by an inductive construction; after enumerating a dense subset of pairs of unitaries $(u_n,v_n)$ in the starting algebra $M$, one can take an amalgamated free product over $\mathrm{W}^*(u_n,v_n)$ with another algebra that contains path of sequentially commuting Haar unitaries that connects $u_n$ to $v_n$.  One also iterates over a dense family of the unitaries in the larger algebras using a diagonalization argument, so the inductive limit has a desired upper bound on the commutation diameter.

Perhaps the most challenging aspect of this construction is how to ensure that the inductive limit is full, or that we have not added ``too many'' commuting unitaries.  The first approach in \cite{CIKE23} was very technical and involved iterating only over pairs of unitaries $(u,v)$ with $u^2 = v^3 = 1$ such that the algebras they generate are orthogonal.  This required lifting theorems for such unitaries as well as an involved deformation--rigidity manipulation using \cite{IPP}. The work \cite{houdayer2023asymptotic} was then able to remove the condition on the pairs of independent unitaries with a more refined lifting theorem. The second approach in \cite{gao20253handleconstructionii1factors} added a sequentially commuting path of length four rather than length three at each stage of the induction.  This ``$3$-handle construction'' resulted in a substantially easier argument to obtain fullness, only requiring the initial input algebra to have property (T) and straightforwardly applying \cite{IPP}, with no need for technical lifting theorems \`a la \cite{CIKE23,houdayer2023asymptotic}.

While the second approach had a much shorter proof, the requirement of property (T) on the base algebra still remained a significant limitation on what kinds of full $\mathrm{II}_1$ factors can be produced.  It also prevents us from generalizing the construction to the type $\mathrm{III}$ setting where property (T) in the na{\"\i}ve sense can never hold.  However, property (T) naturally arises in the course of the argument.   Indeed, by \cite{tan2023spectral}, a $\mathrm{II}_1$ factor $M$ having property (T) is equivalent to every irreducible embedding $M \subseteq N$ having weak spectral gap (that is, $M' \cap N = \C$ implies $M' \cap N^{\mathcal{U}} = \C$).  In \cite{CIKE23} and \cite{gao20253handleconstructionii1factors}, this is what guarantees that the larger algebra obtained from the inductive construction is still full.  Thus, without property (T), we cannot transfer fullness to the larger algebra without using more particular properties of the embedding.  In light of this as well as Peterson's conjecture \cite[Problem U5]{JessePL}, it was essentially unclear whether property (T) was playing a conceptual role in obtaining non-elementarily equivalent full $\mathrm{II}_1$ factors.

This paper will give a new argument for fullness that avoids property (T) entirely.  Instead, we use the well-known fact that if $M \subseteq N$ is an inclusion of factors and $L^2(N) \ominus L^2(M)$ is a weakly coarse bimodule over $M$, then fullness of $M$ implies fullness of $N$ (see for instance \cite{ioana2023existential}).  We thus obtain the following main result.

\begin{main} \label{main: fullness}
Let $M$ be any $\mathrm{II}_1$ factor.  Let $\Theta(M)$ be obtained from $M$ by the iterated $3$-handle construction of \cite{gao20253handleconstructionii1factors} (see \S \ref{sec: exotic factor construction} below).  Then $\Theta(M)$ is a full $\mathrm{II}_1$ factor.
\end{main}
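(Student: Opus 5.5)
The plan is to reduce fullness of $\Theta(M)$ to a bimodule computation. We will use the fact recalled above: if $P \subseteq Q$ is an inclusion of $\mathrm{II}_1$ factors with $P$ full and $L^2(Q)\ominus L^2(P)$ weakly contained in the coarse bimodule $L^2(P)\otimes L^2(P)$ as a $P$-$P$ bimodule, then $Q$ is full. It also gives $P'\cap Q^{\mathcal U}\subseteq P^{\mathcal U}$, and hence $P'\cap Q^{\mathcal U}=\C$.

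Recall that $\Theta(M)$ is the inductive limit $M=M_0\subseteq M_1\subseteq\cdots$. Each $M_{k+1}$ is obtained from $M_k$ by a countable sequence of amalgamated free products $N\ast_{\mathrm{W}^*(u,v)}(\mathrm{W}^*(u,v)\ast_\ldots)$ that attach a $3$-handle, i.e.\ a path of four sequentially commuting Haar unitaries joining $u$ to $v$. The first step is therefore to show the following: for every $k$, the $M$-$M$ bimodule $L^2(M_k)\ominus L^2(M)$ is weakly coarse, i.e.\ weakly contained in $L^2(M)\otimes L^2(M)$. Weak containment passes to increasing unions and closures. Hence the same holds for $L^2(\Theta(M))\ominus L^2(M)$, and we get $M'\cap\Theta(M)^{\mathcal U}\subseteq M^{\mathcal U}$.

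This alone is not enough when $M$ is not full, and $M$ is arbitrary here. So the argument must exploit the handles themselves. Let $(x_n)$ be a central sequence in $\Theta(M)$ with $\tau(x_n)=0$. We will show it is asymptotically trivial by using the handle unitaries. Every unitary $u$ of $M_k$ (up to approximation) is joined inside $\Theta(M)$ to a standard unitary, say a fixed Haar unitary $w$ in the first handle. The join is a path $u=a_0,a_1,\dots,a_4=v$ where consecutive unitaries commute and the intermediate unitaries $a_1,a_2,a_3$ lie in new free pieces. A central sequence commutes asymptotically with $a_1,\dots,a_3$. Each $a_i$ generates, together with its neighbour, an algebra over which the next amalgamated free product is built, and the corresponding bimodule $L^2(\text{new})\ominus L^2(\text{old})$ is, by the normal form computation, a multiple of $L^2(A)\otimes_{B}L^2(A)$ with $B$ the abelian amalgam. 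The point is that the mixing of these diffuse abelian pieces forces $\|x_n-E_{\mathrm{W}^*(a_i)}(x_n)\|_2\to0$ at each stage. We will iterate this along the path to push $x_n$ into $\mathrm{W}^*(w)\cap\{w\}'$-type pieces. Finally, we will use that $w$ sits in a free product with a diffuse algebra, giving spectral gap and killing $x_n$.

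The main obstacle is the first step: computing $L^2(M_{k+1})\ominus L^2(M_k)$ as an $M$-bimodule, or better as an $M_k$-bimodule restricted along handles. Iterated amalgamated free products over varying abelian subalgebras $\mathrm{W}^*(u_n,v_n)$ produce reduced words whose letters alternate between pieces over different amalgams. I would first write a normal form decomposition. The space of words is the direct sum over alternating sequences of $L^2(N)\otimes_{B_1}(L^2(A_1)\ominus L^2(B_1))\otimes_{B_1}\cdots$. I would then show each summand is a sub-bimodule of $L^2(M_k)\otimes_{B}\mathcal K\otimes_B L^2(M_k)$ with $B$ diffuse abelian. Because $B$ is amenable, such a relative tensor product is weakly contained in a multiple of the coarse bimodule; this is the key use of the abelian amalgams. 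The delicate part is organizing words so that each summand genuinely begins and ends in $M_k$ with a $B$-tensor seam, and not with a letter from an earlier handle that already commutes with $M_k$ elements. I would handle this by induction on word length, grouping consecutive letters from the same handle.
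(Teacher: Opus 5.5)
The overall reduction (per-stage weak coarseness, passage to the inductive limit, transfer of fullness from a full base) is right, but your proposal has two genuine gaps. The first is in the bimodule step, which assumes every amalgam is diffuse abelian (``this is the key use of the abelian amalgams''). That holds only for the two $\Omega_1$ stages. Those amalgamate over $\mathrm{W}^*(u_1)$ and $\mathrm{W}^*(u_2)$ and are handled by Proposition~\ref{prop: amen-afp-wk-coarse}. The third stage amalgamates $N_2=\Omega_1(\Omega_1(M_k,u_1),u_2)$ over $C=\mathrm{W}^*(\omega(u_1),\omega(u_2))$. The unitaries $\omega(u_1),\omega(u_2)$ are \emph{free} and do not commute, so $C\cong L(\mathbb{F}_2)$ is non-amenable. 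In fact this stage is not even weakly coarse over $N_2$: $L^2(M_{k+1})\ominus L^2(N_2)$ contains $L^2(N_2)\otimes_C L^2(N_2)\supseteq {}_C L^2(C)_C$. Weak coarseness over $N_2$ would then restrict to $L^2(C)\prec L^2(C)\otimes L^2(C)$, forcing $C$ to be amenable. So coarseness can only hold over the base $M_k$. The reason is not an abelian seam but the fact that ${}_C L^2(N_2)_{M_k}\cong {}_C L^2(C)\otimes_{\C} H_{M_k}$ is coarse (Theorem~\ref{thm: coarseness 1}). Proving this is the word-decomposition work of Section~\ref{sec: bimodule computation}: freeness of the $\omega(u_j)$, and the factorization $\tau((x\xi)^*(y\eta))=\tau(x^*y)\tau(\xi^*\eta)$ for $x,y\in C$ and $\xi,\eta$ reduced words beginning with a letter of $M_k$. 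The outer factors $L^2(N_2)$ in $L^2(N_2)\otimes_C\cdots\otimes_C L^2(N_2)$ then make the third stage coarse over $M_k$ (Theorem~\ref{thm: coarseness 2}), which is all Proposition~\ref{prop: lim-of-coarse} needs.

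The second gap is in the fullness step. First, ``$M'\cap\Theta(M)^{\mathcal U}\subseteq M^{\mathcal U}$'' does not follow from weak coarseness when $M$ is amenable (e.g.\ $M=R$, which the theorem allows), since then the coarse bimodule has almost central vectors. Second, the path-pushing is not an argument. Nothing you have established forces $\|x_n-E_{\mathrm{W}^*(a_i)}(x_n)\|_2\to0$ for a sequence central in the inductive limit, and the commuting path plays no role in fullness. The missing idea is to change the base of the transfer from $M$ to the algebra after the first handle step (your $M_1$, the paper's $M_2$), which is already full. It equals $N_2*_C(C\ootimes L(\Z))$. A trace-zero unitary $u\in M$ has $E_C(u)=\tau(u)=0$ by coarseness of ${}_C L^2(N_2)_M$, and there are unitaries $v,w\in L(\Z)$ with $E_C(v)=E_C(w)=E_C(w^*v)=0$. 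So \cite[Theorem~6.1]{Ioa15} gives $M_1'\cap M_1^{\mathcal U}\subseteq C'\cap C^{\mathcal U}=\C$, because $C\cong L(\mathbb{F}_2)$ is full. Weak coarseness of $M_1\subset\Theta(M)$ together with Proposition~\ref{prop: full-and-coarse} then finishes the proof.
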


Aside from opening up new possibilities for elementary equivalence results, the above result yields the following quick corollary which demonstrates the flexibility of being able to choose an arbitrary input factor.

\begin{mcor} \label{main: no relative T}
$\Theta(L(\mathbb{F}_2))$ is a full $\mathrm{II}_1$ factor that is not elementarily equivalent to $L(\mathbb{F}_2)$, and does not contain any diffuse property (T) subalgebra.
\end{mcor}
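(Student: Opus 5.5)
The statement has three parts, each proved separately.

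\emph{Fullness.} Take $M = L(\mathbb{F}_2)$ in Theorem \ref{main: fullness}. Since $L(\mathbb{F}_2)$ is a $\mathrm{II}_1$ factor, $\Theta(L(\mathbb{F}_2))$ is a full $\mathrm{II}_1$ factor.

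\emph{Non-elementary equivalence.} I would compare sequential commutation diameters, which are invariants of the ultrapower and hence of elementary equivalence. By construction, the iterated $3$-handle construction of \cite{gao20253handleconstructionii1factors} enumerates a dense family of pairs of Haar unitaries at every stage and adjoins a path of sequentially commuting Haar unitaries joining them. As shown there, the inductive limit therefore has finite sequential commutation diameter (at most $4$), and this bound does not depend on the input algebra. In contrast, the commutation graph of $L(\mathbb{F}_2)^{\mathcal{U}}$ is disconnected: the two free generators lie in different orbits \cite{Hayes8, jekel2024upgradedfreeindependencephenomena}, so its diameter is $+\infty$. If $\Theta(L(\mathbb{F}_2))$ and $L(\mathbb{F}_2)$ were elementarily equivalent, their ultrapowers would be isomorphic (possibly after passing to suitable ultrafilters). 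Since the diameter is computed inside the ultrapower, it would then agree for both algebras, which is a contradiction. The only point needing care is that the diameter bound in \cite{gao20253handleconstructionii1factors} is formulated in the ultrapower, or via a sentence in continuous logic, and not merely for the dense family. I would cite that formulation directly.

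\emph{No diffuse property (T) subalgebra.} This is the main obstacle. My plan is to show that $\Theta(L(\mathbb{F}_2))$ is built from $L(\mathbb{F}_2)$ and amenable pieces by amalgamated free products over amenable (abelian) subalgebras, followed by inductive limits. Each handle added is generated by Haar unitaries with commutation relations, so it is a finite amalgamated free product of abelian algebras over abelian algebras, amalgamated over $\mathrm{W}^*(u,v)$ with the current algebra. I would then argue by contradiction, in four steps.

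\begin{enumerate}
\item Suppose $Q \subseteq \Theta(L(\mathbb{F}_2))$ is diffuse with property (T). By rigidity of (T) under the inductive-limit deformation (the conditional expectations onto the increasing sequence of stages converge pointwise in $\|\cdot\|_2$), $Q$ embeds in Popa's sense into some finite stage $M_k$.
\item Inside $M_k$, written as an amalgamated free product, I would apply \cite{IPP}: a property (T) subalgebra must intertwine into one of the factors.
\item Each factor is either hyperfinite or an earlier stage. Iterating the argument pushes $Q$ down to $L(\mathbb{F}_2)$ or to an amenable algebra.
\item $L(\mathbb{F}_2)$ is Haagerup and amenable algebras are Haagerup, so neither contains a diffuse (T) corner. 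This gives the contradiction.
\end{enumerate}

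Two points need care. The subalgebra $\mathrm{W}^*(u,v)$ is amenable only if the chosen pairs generate an amenable algebra, so I would check how the pairs are chosen in \S\ref{sec: exotic factor construction}. If that check fails, I would instead argue that the handle construction preserves the Haagerup property relative to the amalgamation, using Boca's theorem for amalgamated free products over finite-dimensional or abelian subalgebras. I would also need to pass corners correctly through the intertwining steps.
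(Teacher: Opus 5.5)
Your three-part plan follows the paper. Fullness is Theorem \ref{main: fullness} with $M=L(\mathbb{F}_2)$. Non-elementary equivalence comes from the finite (at most $4$) commutation diameter built into the $3$-handle construction, versus the disconnected commutation graph for $L(\mathbb{F}_2)$. Your Steps 1--4 are the argument of \S\ref{sec: exotic factor construction}: uniform convergence of the expectations on a (T) algebra gives intertwining into some stage $M_k$; then the \cite{IPP}/\cite{PV09} theorem applies in each amalgamated free product, the new side is ruled out, and you recurse down to $M_1$.

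However, your description of the construction is wrong, and this makes Step 3 false as written. The enumerated pair $(u_1,u_2)$ is never amalgamated over jointly. The first two amalgamations are over the abelian algebras $W^*(u_1)$ and $W^*(u_2)$. The third is over $C=W^*(\omega(u_1),\omega(u_2))$, which is $L(\mathbb{F}_2)$ by Theorem \ref{thm: coarseness 1}, and its new side is $L(\mathbb{F}_2)\ootimes L(\mathbb{Z})$, which is not hyperfinite. So your ``check'' fails at every step.

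Your fallback does not work either. Boca's theorem requires finite-dimensional amalgams, whereas these are diffuse (at the third stage, $L(\mathbb{F}_2)$). Moreover, nothing in the argument needs, or establishes, that $\Theta(L(\mathbb{F}_2))$ has the Haagerup property.

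The correct repair is that amenability of the amalgam is irrelevant. The rigidity/intertwining theorem of \cite{IPP} for $M_1*_BM_2$ holds for an arbitrary $B$; the paper applies it with $B=L(\mathbb{F}_2)$. At Step 4 you only need that the new side of each amalgamation has the Haagerup property; that side is $W^*(u_i)\ootimes L(\mathbb{Z})$ or $L(\mathbb{F}_2)\ootimes L(\mathbb{Z})$. Then by \cite{ConnesJones} no corner of it can contain the diffuse (T) algebra $\theta(pQp)$ produced by the intertwining homomorphism. Hence $Q$ intertwines into the old side, and you recurse down to $M_1=L(\mathbb{F}_2)$, which is Haagerup. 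With ``hyperfinite/amenable'' replaced by ``Haagerup'' in Steps 3--4, your argument coincides with the paper's.
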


Our approach is based on a new explicit calculation of the \emph{bimodule structure} of the iterated amalgamated free products occurring in the $3$-handle construction of \cite{gao20253handleconstructionii1factors}.  One step in the $3$-handle construction is performed as follows.  We take as input a $\mathrm{II}_1$ factor $N_0$ and two unitaries $u_1,u_2\in \mathcal{U}(N_0)$.  Then set
\begin{align*}
N_1 &= N_0*_{\{u_1\}''}(\{u_1\}''\ootimes L(\mathbb{Z})) \\
N_2 &= N_1 *_{\{u_2\}''}({\{u_2\}''}\ootimes L(\mathbb{Z})) \\
N &=N_2*_{\{v_1, v_2\}''}(\{v_1, v_2\}''\ootimes L(\mathbb{Z})),
\end{align*}
where $v_1$, $v_2$ are the generators of the two copies of $L(\mathbb{Z})$ in $N_2$.  We are able to prove that $N_0\subset N$ is \emph{weakly coarse}; i.e, the bimodule $_{N_0}[L^2(N)\ominus L^2(N_0)]_{N_0}$ is \emph{weakly contained} in the coarse $N_0$-bimodule, namely $_{N_0}[L^2(N_0) \otimes L^2(N_0)^{\text{op}}]_{N_0}$.  The novel part of the argument is the coarseness of $N_2$ inside $N$, which uses subtle word decomposition arguments.  We conclude that fullness of $N_0$ implies fullness of $N$, and the same holds for the inductive limit since weak coarseness passes to inductive limits, allowing us to show Theorem A.

To prove Corollary \ref{main: no relative T} from this point, we begin the handle construction with $L(\mathbb{F}_2)$, use standard techniques in deformation rigidity and perturbation to precisely locate $(T)$ subalgebras, and use the Haagerup property of $L(\mathbb{F}_2)$ and \cite{ConnesJones} to conclude.  

Our bimodule computation also does not rely on the state being tracial, so it allows us to prove fullness for such constructions on von Neumann algebras with faithful normal states.  In particular, we prove fullness of type $\mathrm{III}_1$ factors obtained from an iterated $3$-handle construction, even though there is no satisfactory of analog of property (T) in the type $\mathrm{III}$ setting.  However, there is an added subtlety in the type $\mathrm{III}$ setting that we must restrict to pairs of unitaries $(u_1,u_2)$ such that $\{u_j\}'' \subseteq M$ admits a state-preserving conditional expectation (or at least $u_j$ is contained in some amenable subalgebra with state-preserving conditional expectation).  This restriction on the unitaries prevents us from immediately concluding something about elementary equivalence in the type $\mathrm{III}$ setting, although it seems reasonable to conjecture that the algebras arising from the iterated $3$-handle construction would be non-elementarily equivalent to free Araki--Woods factors (using \cite{hayes2024generalsolidityphenomenaanticoarse}).

\subsection*{Acknowledgements}

This work began in 2023 at UCSD. The first, third and fourth authors thank the Torrey Pines Gliderport wherein several meetings took place. The project was completed during a visit of the second author to the University of Maryland supported by the Brin Mathematics Research Center. We are very grateful to Adrian Ioana and Jesse Peterson for their constant encouragement and insightful suggestions. We are also grateful to Soham Chakraborty, Amine Marrakchi, Brent Nelson for some helpful correspondences. We thank Jesse Peterson again for pointing out a mistake in an earlier draft.

\subsection*{Funding}

The second author is supported by the Horizon Europe Marie Sk{\l}odowska Curie Action FREEINFOGEOM, project 101209517.\footnote{Views and opinions expressed are those of the author(s) only and do not necessarily reflect those of the European Union or the Research Executive Agency. Neither the European Union nor the granting authority can be held responsible for them.}
The third author is supported by NSF grant DMS 2350049. The fourth author was supported by the Engineering and Physical Sciences Research Council (UK), grant EP/X026647/1.  

\subsection*{Open Access and Data Statement}

For the purpose of Open Access, the authors have applied a CC BY public copyright license to any Author Accepted Manuscript (AAM) version arising from this submission. Data sharing is not applicable to this article as no new data were created or analyzed in this work.

\section{Preliminaries} \label{sec: preliminaries}

To keep the article concise, we will assume the reader is familiar with the standard theory of von Neumann algebras (see for instance \cite{anantharaman-popa, BrownOzawa2008}). 

\subsection{Ultrapowers of general von Neumann algebras}

\begin{defn}
    Let $M$ be a von Neumann algebra with separable predual and with a normal faithful state $\varphi$ and let $\cU$ be a free ultrafilter on $\N$. We make the following definitions as in Sections 2 and 3 of \cite{AndoHaag2014typeIIIultra} (see also \cite[Section~5]{Oceanu1985}):
    \begin{itemize}
        
        \item $\ell^\infty(\N,M) := \{(x_n)_n \in \prod_{n\in\N}M : \sup \|x_n\| < \infty\}$;
        \item $\|x\|_\varphi^\sharp := \varphi(xx^* + x^*x)^{1/2}$;
        \item $\cI_\cU := \{(x_n)_n\in \ell^\infty(\N,M) : \lim_{n\in\cU}\|x_n\|_\varphi^\sharp = 0\}$;
        \item $\cM^\cU := \{(x_n)_n\in \ell^\infty(\N,M) : (x_n)_n\cI_\cU\subset\cI_\cU \text{ and }\cI_\cU(x_n)_n\subset\cI_\cU \}$;
        \item $M^\cU := \cM^\cU /\cI_\cU$;
        \item $\cM_\cU := \{(x_n)_n\in \ell^\infty(\N,M) : \|x_n\psi -\psi x_n\| \to 0 \text{ for all } \psi\in M_* \}$;
        \item $M_\cU := \cM_\cU / \cI_\cU$.
    \end{itemize}
\end{defn}

The ideal $\cI_\cU$ in fact does not depend on the choice of $\varphi,$ so we are justified in the notations $M^\cU$ and $M_\cU$. In general, we have the inclusion $M_\cU \subset M'\cap M^\cU$. In the case $M_\cU = \C,$ we also have $M'\cap M^\cU = \C$ by \cite[Theorem~5.2]{AndoHaag2014typeIIIultra}. This theorem justifies the following definition of fullness.

\begin{defn}
A von Neumann algebra $M$ is said to be \emph{full} if $M'\cap M^\cU = \C1$.
\end{defn}

\subsection{Bimodules over general von Neumann algebras}

Implicitly, we endow all von Neumann algebras herein with a faithful, normal state. We say that a von Neumann subalgebra $N\subset M$ is \emph{with expectation} if there is a normal state-preserving conditional expectation $E: M\to N$.  Note a state-preserving expectation must be faithful if the state is faithful. 

We recall the following definitions facts about bimodules over von Neumann algebras (see \cite[Appendix F]{BrownOzawa2008}, \cite{OOTHaag}, \cite{BMOCoAmenable}):
\begin{itemize}
    \item For von Neumann algebras $M$ and $N$, an \emph{$M$-$N$-bimodule} is a Hilbert space $H$ equipped with a normal left action of $M$ and right action of $N$.
    \item For a von Neumann algebra $M$ with faithful normal state $\varphi$, the \emph{trivial bimodule} $L^2(M,\varphi)$ (often denoted simply $L^2(M)$) is the GNS space equipped with the left and right actions $x \cdot \xi \cdot y = x JyJ \xi$ where $J$ is the modular conjugation operator.
    \item \emph{The coarse $M$-$N$ bimodule} is the bimodule $L^2(M) \otimes_{\C} L^2(N)$.  More generally, an $M$-$N$-bimodule $H$ is said to be \emph{coarse} if $M$ embeds into a direct sum of copies of $L^2(M) \otimes_{\C} L^2(N)$.
    \item An $M$-$N$-bimodule $\cH$ is \emph{weakly contained} in another $M$-$N$-bimodule $\cK$ if $\cH$ is in the closure of $\cK^{\oplus\infty}$ in the Fell topology (see \cite[Section~13.3.2]{anantharaman-popa}).  In particular, an $M$-$N$ bimodule is \emph{weakly coarse} if it is weakly contained in the coarse bimodule.
    \item Given bimodules $_{M_1} H_{M_2}$ and $_{M_2} K_{M_3}$, one may form a \emph{relative tensor product} or \emph{Connes fusion} $_{M_1} H \otimes_{M_2} K_{M_3}$; see \cite{OOTHaag}.  In the special case that $M_2 \subseteq M_1$ with expectation $E_{M_2}$ and $H = {}_{M_1} L^2(M_1)_{M_2}$, the bimodule ${}_{M_1} L^2(M_1) \otimes_{M_2} K_{M_3}$ is described as the completion of $M_1 \otimes_{\operatorname{alg}} K$ with respect to the inner product given by
    \[
    \ip{x \otimes \xi, y \otimes \eta} = \ip{\xi, E_{M_2}[x^*y] \eta}_K.
    \]
\end{itemize}

The following two facts are well known to experts, but we include a proof for the benefit of the reader. 

\begin{prop}\label{prop: full-and-coarse}
Let $N$ be a full factor and $N\subset M$ be a von Neumann subalgebra with expectation such that $_N[L^2(M)\ominus L^2(N)]_N$ is weakly contained in the coarse $N$-$N$-bimodule. Then $M$ is a full factor.
\end{prop}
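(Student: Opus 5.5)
Since $N$ is full and the inclusion $N \subset M$ is with expectation, it suffices to show that $N' \cap M^{\cU} \subseteq N^{\cU}$. Indeed, then $M' \cap M^{\cU} \subseteq N' \cap M^{\cU} \subseteq N' \cap N^{\cU} = \C$. This gives fullness of $M$, and factoriality follows as well, since $Z(M) \subseteq M' \cap M^{\cU}$. Here $N^{\cU} \subseteq M^{\cU}$ with expectation $E^{\cU} = (E_N)^{\cU}$, by the Ando--Haagerup theory (a state-preserving expectation passes to the Ocneanu ultrapower). So take $x = (x_n)_n \in N' \cap M^{\cU}$, which we may normalize so that $\|x\| \le 1$. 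Set $y_n = x_n - E_N(x_n)$, and view $\xi_n = y_n \varphi^{1/2} \in L^2(M) \ominus L^2(N) =: \cH$. The goal is to show $\lim_{\cU} \|\xi_n\| = 0$.

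The key step uses fullness of $N$ through a spectral gap characterization. For a full factor $N$ with faithful normal state (Connes' criterion in the tracial case, and its extension by Marrakchi/Houdayer--Marrakchi--Verraedt in general), there exist finitely many $a_1, \dots, a_k \in N$ and $C > 0$ such that, for every vector $\eta$ in a bimodule weakly contained in the coarse bimodule,
\[
\|\eta\| \le C \sum_i \|a_i \eta - \eta a_i\|.
\]
In the coarse bimodule $L^2(N) \otimes L^2(N)^{\mathrm{op}}$ itself this is the statement that $N$ has no nonzero approximately central vectors in $L^2(N \ootimes N^{\mathrm{op}})$ relative to the diagonal flip action. That in turn is equivalent to fullness of $N$: asymptotically central vectors in the coarse bimodule correspond to central sequences via $L^2(N)\otimes L^2(\bar N) \cong \mathrm{HS}(L^2 N)$. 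In the non-tracial case one must use $\varphi$-centralizing elements or the $\sharp$-norms, which I would handle with the Ando--Haagerup / Marrakchi spectral gap results. An inequality of this type, being a statement about finitely many matrix coefficients, passes to bimodules weakly contained in the coarse one by definition of the Fell topology, and to direct sums.

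Now apply the inequality to $\xi_n \in \cH$, which is allowed by the hypothesis. Since $x$ commutes asymptotically with $N$ and $E_N$ is $N$-bimodular, $\|a_i \xi_n - \xi_n a_i\| \to 0$ along $\cU$. Here the right action is via $J a_i^* J$, and one uses that the $x_n$ lie in $\cM^{\cU}$, so that left and right multiplications are asymptotically controlled by $\|\cdot\|_\varphi^\sharp$. Hence $\|\xi_n\| \to 0$ along $\cU$, so $x = (E_N(x_n))_n \in N^{\cU}$, as required.

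The main obstacle is the second step: establishing the uniform spectral gap inequality for weakly coarse bimodules from fullness in the type III setting. This is where the non-tracial subtleties ($\varphi$ not a trace, right actions through modular theory) enter. I would first try to quote the known result, that $N$ is full iff the coarse bimodule has no almost central vectors, i.e.\ $L^2(N)$ is not weakly contained in it relative to central nets. Failing that, I would prove it in the tracial case via Connes' argument and then reduce the general case using the continuous core.
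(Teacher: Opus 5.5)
Your outline is the paper's: subtract $E_N(x_n)$, observe that the remainder is an asymptotically $N$-central vector in the weakly coarse bimodule, kill it by spectral gap, and finish with fullness of $N$. Like you, the paper only uses $N$-centrality, so your reformulation $N'\cap M^{\cU}\subseteq N^{\cU}$ is exactly what it proves. In the tracial case the argument works, but your justification of the key inequality is wrong. For a $\mathrm{II}_1$ factor, absence of almost central vectors in the coarse bimodule is equivalent to \emph{non-amenability}, not to fullness: almost central $T_k\in\mathrm{HS}(L^2N)$ give almost $\operatorname{Ad}\mathcal{U}(N)$-invariant density matrices $T_k^*T_k$, and any limit of these is a hypertrace. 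For example, $L(\mathbb{F}_2)\ootimes R$ is not full, yet its coarse bimodule has spectral gap. What you actually need is only the chain: $N$ full and diffuse $\Rightarrow$ $N$ non-amenable $\Rightarrow$ spectral gap (Connes). This then passes to weakly contained bimodules as you say. Diffuseness is genuinely needed: for $N=\C\subset M=R$, both your inequality and the proposition fail.

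The genuine gap is the non-tracial case, which is the generality in which the proposition is stated and then used in \S 5. Your claim $\|a_i\xi_n-\xi_na_i\|\to0$ is false there. Since $\xi_n a=Ja^*Jy_n\varphi^{1/2}=y_n(\varphi^{1/2}a)$, one has
\[
a\xi_n-\xi_na=[a,y_n]\varphi^{1/2}+y_n\bigl(a\varphi^{1/2}-\varphi^{1/2}a\bigr).
\]
Here $\|\cdot\|_\varphi^\sharp$-centrality controls only the first term. The second term vanishes when $a$ lies in the centralizer $N_\varphi$, but not in general. Concretely, take $N$ of type $\mathrm{III}$, $M=N\ootimes R$, $\varphi=\varphi_N\otimes\tau$, and $x_n=1\otimes r_n$ with $\tau(r_n)=0$ and $\|r_n\|_2=1$. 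Then $\|a\xi_n-\xi_na\|=\|a\varphi_N^{1/2}-\varphi_N^{1/2}a\|$ for every $n$.

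Worse, for properly infinite $N$ your inequality holds in \emph{every} $N$-bimodule. Take isometries $v_1,v_2$ with $v_1v_1^*+v_2v_2^*=1$. Then $\|v_i\xi\|=\|\xi\|$, while $\|\xi v_1\|^2+\|\xi v_2\|^2=\|\xi\|^2$, so $\max_i\|v_i\xi-\xi v_i\|\ge(1-2^{-1/2})\|\xi\|$. This naive form of spectral gap therefore never sees weak coarseness. If your commutator step were valid, the same argument would give $N'\cap(N\ootimes R)^{\cU}\subseteq N^{\cU}$, which the example above refutes.

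A type $\mathrm{III}$ proof needs a genuinely different mechanism. One route is to pass to $\varphi$-centralizing sequences ($M_\cU=\C$ suffices by Ando--Haagerup). One then uses that $L^2(N)$ is not weakly contained in a weakly coarse bimodule when $N$ is non-amenable, via a type $\mathrm{III}$ spectral gap theorem in the spirit of Marrakchi's characterization of full factors. The paper's own proof follows your outline and is no more explicit about what an ``asymptotically $N$-central vector'' means there. As written, your argument, like the paper's sketch, establishes only the tracial case.
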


\begin{proof}
    Let $(x_n)_n \in M' \cap M^\cU$. Let $E:M\to N$ be the conditional expectation. Set $y_n = x_n - E(x_n)$. Clearly, $y_n \in L^2(M)\ominus L^2(N)$. Moreover, it is easy to check that since $(x_n)_n$ is asymptotically $M$ (and therefore $N$)-central, it follows that $(y_n)_n$ is asymptotically $N$-central too. However, weak coarseness of $_N[L^2(M)\ominus L^2(N)]_N$ implies that it cannot contain any asymptotically $N$-central vectors. Therefore $y_n \to 0$ in 2-norm. Hence $x_n \to E(x_n)$ in 2-norm. But $E(x_n) \in N$ is asymptotically $N$-central, so by fullness of $N,$ $E(x_n)_n$ (and thus $(x_n)_n$) are asymptotically trivial. That is, $M$ is also full.
\end{proof}

\begin{prop}\label{prop: lim-of-coarse}
    Let $N_i\subset N_{i+1}$ be a chain of inclusions of von Neumann algebras for $i\in \mathbb{N}$, and further $_{N_{i}}[L^2(N_{i+1})\ominus L^2(N_i)]_{N_{i}}$ is weakly coarse. Then $N_0\subset \overline{\bigcup_{i\in \mathbb{N}}N_i}^{SOT}$ is weakly coarse.
\end{prop}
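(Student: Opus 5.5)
The plan is to decompose $L^2(N)\ominus L^2(N_0)$, where $N=\overline{\bigcup_i N_i}^{SOT}$, into an orthogonal sum of the successive differences. Then we prove each summand is weakly coarse as an $N_0$-bimodule by restricting and inducing the hypothesis, and use that weak containment is preserved under direct sums.

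Implicitly we assume, as in the paper's setting, that each inclusion $N_i\subset N_{i+1}$ is with expectation for a common faithful normal state $\varphi$ on $N$. The conditional expectations are then compatible: $E_{N_i}\circ E_{N_j}=E_{N_i}$ for $i\le j$. Write $P_i$ for the orthogonal projection of $L^2(N,\varphi)$ onto $L^2(N_i)$. Since $\bigcup_i N_i$ is SOT-dense in $N$, the union $\bigcup_i L^2(N_i)$ is dense in $L^2(N)$, so $P_i\nearrow 1$ strongly. Hence
\[
L^2(N)\ominus L^2(N_0)=\bigoplus_{i\ge 0}\bigl(L^2(N_{i+1})\ominus L^2(N_i)\bigr),
\]
and each summand is an $N_0$-$N_0$-subbimodule. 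For the right action this uses that $JyJ$ for $y\in N_0\subset N_i$ commutes with $P_i$, because $J$ restricts to the modular conjugation of $N_i$ (the state restricts and the expectation is state-preserving, by Takesaki).

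The main step is to show that each $H_i:={}_{N_0}[L^2(N_{i+1})\ominus L^2(N_i)]_{N_0}$ is weakly coarse. By hypothesis, $H_i$ is weakly contained in ${}_{N_i}L^2(N_i)\otimes L^2(N_i)_{N_i}$ as an $N_i$-bimodule. Restriction of bimodules to the subalgebra $N_0$ is continuous for the Fell topology, since the relevant matrix coefficients only get restricted. So $H_i$ is weakly contained, as an $N_0$-bimodule, in ${}_{N_0}L^2(N_i)\otimes L^2(N_i)_{N_0}$. This bimodule is the tensor product of the left $N_0$-module $L^2(N_i)$ and the right $N_0$-module $L^2(N_i)$. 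Every normal representation of $N_0$ is contained in a multiple of $L^2(N_0)$, on the left and likewise on the right. Therefore ${}_{N_0}L^2(N_i)\otimes L^2(N_i)_{N_0}$ is contained in a multiple of $L^2(N_0)\otimes L^2(N_0)$, i.e.\ it is coarse. By transitivity of weak containment, $H_i$ is weakly coarse.

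Finally, a countable direct sum of bimodules each weakly contained in the coarse bimodule $\mathcal{K}$ is weakly contained in $\mathcal{K}^{\oplus\infty}$. To see this, approximate a matrix coefficient of a vector $\xi=\sum_i\xi_i$ by a finite truncation $\sum_{i\le n}\xi_i$, and approximate each $\xi_i$ separately using vectors in disjoint copies of $\mathcal{K}$. The matrix coefficients of the combined approximating vector then add up.

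The only genuinely delicate point I expect is the non-tracial bookkeeping. One must check that the right actions given by $J_NyJ_N$ and $J_{N_i}yJ_{N_i}$ agree on $L^2(N_i)$, and that restriction and direct sums behave well for the Fell topology on normal bimodules (equivalently, for binormal representations of $N_0\otimes_{\max}N_0^{op}$). Both follow from the compatibility of the state-preserving expectations. Everything else is formal.
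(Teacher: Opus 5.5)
Your proposal is correct and is essentially the paper's argument. The key step is the same: restrict the hypothesis from $N_i$ to $N_0$, then note that $L^2(N_i)\otimes L^2(N_i)$ is coarse over $N_0$ because $L^2(N_i)$ is a (sub)multiple of $L^2(N_0)$ on each side. Your orthogonal decomposition $\bigoplus_{i\ge 0}\bigl(L^2(N_{i+1})\ominus L^2(N_i)\bigr)$ with direct-sum stability of weak containment just repackages the paper's finite induction plus its closed-span/finitely-many-vectors approximation, and your handling of the state-preserving expectations and modular conjugations spells out bookkeeping the paper leaves implicit.
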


\begin{proof}
    We first show by induction that $L^2(N_k)\ominus L^2(N_0)$ is weakly coarse for all $k.$ The base case $k=1$ is by hypothesis. Otherwise, suppose $L^2(N_k)\ominus L^2(N_0)$ is weakly coarse. Note that
    $$L^2(N_{k+1})\ominus L^2(N_0) = (L^2(N_{k+1})\ominus L^2(N_k)) \oplus (L^2(N_k)\ominus L^2(N_0)).$$
    Therefore, it suffices to show that $L^2(N_{k+1})\ominus L^2(N_k)$ is weakly coarse as an $N_0$-$N_0$-bimodule. By hypothesis, it is weakly coarse as an $N_k$-$N_k$-bimodule, so it suffices to show that $L^2(N_k)\otimes L^2(N_k)$ is (weakly) coarse as an $N_0$-$N_0$-bimodule. But this is clear since as a left (and right) $N_0$-module $L^2(N_k)$ is a direct sum of copies of $L^2(N_0)$.

    Now denote by $N$ the inductive limit $\overline{\bigcup_{i\in \mathbb{N}}N_i}^{SOT}$. Note that $L^2(N) \ominus L^2(N_0)$ is the closed span of $L^2(N_k)\ominus L^2(N_0)$ as $k$ ranges over $\N.$ Since weak containment is a property checked only by finitely many vectors and with an $\varepsilon$ tolerance, it is immediate that $L^2(N)\ominus L^2(N_0)$ is weakly coarse too.
\end{proof}

\subsection{Amalgamated free products}

Let $B\subset M_1,M_2$ be a subalgebra with expectation. Then we may form the \emph{amalgamated free product} $M_1*_BM_2$. We follow Section 3 of \cite{Popa93} (see also \cite[Section~2]{GKEPTconjugacy2025}). The von Neumann algebra $M_1 *_B M_2$ is SOT-densely spanned by elements of the form $x=b\in B$ and $x = x_{1,i_1}\cdots x_{k,i_k}$ where $x_{j,i_j}\in M_{i_j}$, $E_{i_j}(x_{j,i_j}) = 0$ (in other words, $x_{j,i_j}\in M_i\ominus B$), and $i_1\neq \ldots \neq i_k$. We call such elements reduced words. We note that the subspaces $B$ and $(M_{i_1}\ominus B)\cdots(M_{i_k}\ominus B)$ are all orthogonal for different tuples $(i_1,\ldots,i_k)$ such that $i_1\neq \ldots \neq i_k$. Multiplication on reduced words is defined by concatenation, and the linear span of reduced words is closed under multiplication via the simplification operation $(x,y)\in M_i^2 \mapsto E_B(xy) + (xy- E_B(xy))$. The amalgamated free product $M_1*_BM_2$ is typically represented on the following Hilbert space, often also viewed as a $B$-$B$-bimodule.

\begin{align*}
    L^2(M_1*_BM_2) = L^2(B) \oplus \bigoplus_{n\ge1}\bigoplus_{i_1\neq \ldots\neq i_n} &(L^2(M_{i_1})\ominus L^2(B))\otimes_B \\
    &\cdots \otimes_B (L^2(M_{i_n})\ominus L^2(B)).
\end{align*}
The following then follows from a straighforward computation.

\begin{lem}\label{lem: afp-bimod}
    As an $M_1$-$M_1$-bimodule, we have 
    \begin{align*}
        L^2(M_1*_BM_2) &= L^2(M_1) \oplus \bigoplus_{n\ge0} L^2(M_1) \otimes_B (L^2(M_2)\ominus L^2(B)) \\ &\otimes_B ((L^2(M_1)\ominus L^2(B)) \otimes_B (L^2(M_2)\ominus L^2(B)))^{\otimes_Bn} \otimes_B L^2(M_1).
    \end{align*}
\end{lem}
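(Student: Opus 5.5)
Starting from the standard decomposition of $L^2(M_1*_BM_2)$ displayed above, I will regroup the summands according to their first and last letters. Write $\mathring{L}^2(M_i)=L^2(M_i)\ominus L^2(B)$. Every reduced word of length $n\ge 1$ is an alternating tensor product of spaces $\mathring{L}^2(M_{i_j})$. I sort these words by whether they contain at least one factor $\mathring{L}^2(M_2)$. The words with no $M_2$ factor are $L^2(B)$ and $\mathring{L}^2(M_1)$, and together they make up $L^2(M_1)$. This is the first summand, with the usual bimodule structure.

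Every other word has the form
\[
(\text{optional }\mathring{L}^2(M_1))\otimes_B \mathring{L}^2(M_2)\otimes_B(\mathring{L}^2(M_1)\otimes_B\mathring{L}^2(M_2))^{\otimes_B n}\otimes_B(\text{optional }\mathring{L}^2(M_1)),
\]
where $n\ge 0$ is one less than the number of $M_2$ factors. Summing over the two optional choices on each side, and using $L^2(B)\oplus\mathring{L}^2(M_1)=L^2(M_1)$ as $B$-bimodules together with distributivity of $\otimes_B$ over direct sums, the $n$-th block becomes
\[
L^2(M_1)\otimes_B\mathring{L}^2(M_2)\otimes_B(\mathring{L}^2(M_1)\otimes_B\mathring{L}^2(M_2))^{\otimes_Bn}\otimes_B L^2(M_1).
\]
The summands are mutually orthogonal because the corresponding word spaces are orthogonal, as noted earlier. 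This gives the claimed identification as Hilbert spaces.

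It then remains to check that this unitary identification intertwines the $M_1$-actions. On the right-hand side, $M_1$ acts on the outer factor $L^2(M_1)$. For the left action, I take $x\in M_1$ and a reduced word $w$ whose first letter $y\in M_2\ominus B$ has been multiplied on the left by some $a\in M_1$, where $a=b+\mathring a$ with $b\in B$. I compare $xw$ computed in the free product with $xa\otimes(\cdots)$ on the right-hand side. In the free product, $xa=E_B(xa)+(xa-E_B(xa))$, so $xaw$ splits into the words $E_B(xa)\,y\cdots$ and $(xa-E_B(xa))\,y\cdots$. This is exactly the decomposition of $xa\in L^2(M_1)$ into its $L^2(B)$ and $\mathring{L}^2(M_1)$ components, so the two actions agree. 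The right action is symmetric.

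The main point requiring care is that these identifications respect the inner products. This follows from the formula for $\langle x\otimes\xi,y\otimes\eta\rangle$ in terms of $E_B$, combined with the inner product on reduced words, $\langle a_1\cdots a_k, c_1\cdots c_k\rangle=\varphi(a_k^*\cdots E_B(a_1^*c_1)\cdots c_k)$. In the non-tracial case, the right action should be implemented through $J$, and I would verify it on the dense subspace of analytic elements.
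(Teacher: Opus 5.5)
Your proposal is correct and is essentially the ``straightforward computation'' the paper has in mind. You regroup the standard alternating-word decomposition of $L^2(M_1*_BM_2)$ according to whether a word contains an $M_2$-letter, absorb the optional outer $L^2(M_1)\ominus L^2(B)$ letters using $L^2(B)\oplus(L^2(M_1)\ominus L^2(B))=L^2(M_1)$, and check the $M_1$-actions through the splitting $xa=E_B(xa)+(xa-E_B(xa))$, handling the right action via $J$ and analytic elements in the non-tracial case.
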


This implies the following well-known coarseness result for amalgamated free products over amenable subalgebras.

\begin{prop}\label{prop: amen-afp-wk-coarse}
    Let $N_1,N_2$ be von Neumann algebras and $A$ an amenable subalgebra of both $N_1,N_2$ with expectation. Set $M=N_1*_A N_2$. Then $N_1\subset M$ is weakly coarse.
\end{prop}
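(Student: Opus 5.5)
The plan is to read off the $N_1$-$N_1$-bimodule structure of $L^2(M)\ominus L^2(N_1)$ from Lemma \ref{lem: afp-bimod} (with $M_1=N_1$, $M_2=N_2$, $B=A$), and then use amenability of $A$ to compare each summand with the coarse bimodule. By Lemma \ref{lem: afp-bimod},
\[
L^2(M)\ominus L^2(N_1)\cong \bigoplus_{n\ge 0} L^2(N_1)\otimes_A \mathcal{K}_n \otimes_A L^2(N_1),
\]
where $\mathcal{K}_n$ denotes the $A$-$A$-bimodule
\[
(L^2(N_2)\ominus L^2(A))\otimes_A\big((L^2(N_1)\ominus L^2(A))\otimes_A(L^2(N_2)\ominus L^2(A))\big)^{\otimes_A n}.
\]
Since weak containment is stable under direct sums, it suffices to show that each summand $L^2(N_1)\otimes_A \mathcal{K}\otimes_A L^2(N_1)$ is weakly contained in the coarse $N_1$-bimodule, for an arbitrary $A$-$A$-bimodule $\mathcal{K}$.

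The key input is amenability of $A$. In Connes' sense (and its non-tracial version, see \cite{BMOCoAmenable}, \cite{OOTHaag}), amenability of $A$ means that the trivial bimodule ${}_AL^2(A)_A$ is weakly contained in the coarse bimodule ${}_AL^2(A)\otimes L^2(A)_A$. Taking the relative tensor product on both sides with $\mathcal{K}$, and using continuity of Connes fusion with respect to weak containment, we get
\[
\mathcal{K}\cong L^2(A)\otimes_A\mathcal{K}\otimes_A L^2(A)\prec (L^2(A)\otimes L^2(A))\otimes_A\mathcal{K}\otimes_A(L^2(A)\otimes L^2(A)).
\]
The right-hand side equals $L^2(A)\otimes (L^2(A)\otimes_A\mathcal{K}\otimes_A L^2(A))\otimes L^2(A)$, which is a multiple of the coarse $A$-bimodule $L^2(A)\otimes L^2(A)$. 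So every $A$-$A$-bimodule is weakly coarse, and in particular $\mathcal{K}\prec \bigoplus L^2(A)\otimes L^2(A)$.

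We then induce up, again by continuity of fusion under weak containment:
\[
L^2(N_1)\otimes_A\mathcal{K}\otimes_A L^2(N_1)\prec \bigoplus L^2(N_1)\otimes_A L^2(A)\otimes L^2(A)\otimes_A L^2(N_1)=\bigoplus L^2(N_1)\otimes L^2(N_1).
\]
The right-hand side is coarse as an $N_1$-$N_1$-bimodule. Summing over $n$ gives the claim.

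The main obstacle is justifying, in the non-tracial setting, that Connes fusion is continuous for weak containment and that amenability (injectivity) of $A$ is equivalent to ${}_AL^2(A)_A\prec$ coarse. Both are standard in the tracial case (\cite[Ch.~13]{anantharaman-popa}). For general faithful normal states I would cite the bimodule/correspondence framework of \cite{BMOCoAmenable} and \cite{OOTHaag}, where injectivity corresponds to weak containment of the identity correspondence in the coarse one and fusion is continuous in the Fell topology. The existence of the state-preserving expectations onto $A$ is what makes the fusions over $A$ and the decomposition of Lemma \ref{lem: afp-bimod} well defined.
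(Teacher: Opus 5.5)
Your proposal is correct and follows essentially the same route as the paper: decompose $L^2(M)\ominus L^2(N_1)$ via Lemma \ref{lem: afp-bimod}, then use $L^2(A)\prec L^2(A)\otimes L^2(A)$ together with continuity of Connes fusion under weak containment to replace $\otimes_A$ by $\otimes_{\C}$. The only difference is packaging. You first show that every $A$-$A$-bimodule is weakly coarse and then induce up to $N_1$, whereas the paper inserts $L^2(A)\prec L^2(A)\otimes L^2(A)$ directly into the fusion, giving $\cH\otimes_A\cK\prec\cH\otimes\cK$.
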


\begin{proof}
    Since $A$ is amenable, we have that $L^2(A)\prec L^2(A)\otimes L^2(A)$. Therefore, for any $N_1$-$N_1$-bimodules $\cH,\cK$, we have that 
    $$\cH\otimes_A\cK = \cH\otimes_A L^2(A) \otimes_A\cK \prec \cH\otimes_A L^2(A)\otimes L^2(A)\otimes_A\cK = \cH\otimes\cK.$$
    Set $\cH_1 = L^2(N_1) \ominus L^2(A)$ and $\cH_2 = L^2(N_2)\ominus L^2(A)$. Now, by Lemma \ref{lem: afp-bimod}, we have 
    \begin{align*}
L^2(M) \ominus L^2(N_1)
\,\, \cong \,\, & \bigoplus_{k \geq 0} L^2(N) \otimes_A \cH_2 \otimes_{A} (\cH_1 \otimes_A \cH_2)^{\otimes_A k} \otimes_A L^2(N)\\
\prec \, \, & \bigoplus_{k \geq 0} L^2(N) \otimes \cH_2 \otimes (\cH_1 \otimes \cH_2)^{\otimes k} \otimes L^2(N)\\
\prec \, \, & L^2(N) \otimes L^2(N).
\end{align*}
\end{proof}

\section{Bimodule computations} \label{sec: bimodule computation}

We consider the following setup which generalizes the construction of $N_0 \subseteq N_1 \subseteq N_2$ in the introduction.  This would correspond to the case of Notation \ref{not: free product setup} below where $n = 2$ and $B_j = \{u_j\}''$ and $C_j = \{v_j\}''$.  The addition of the last unitary for the $3$-handle construction that commutes with the $v_j$'s will be handled at the last step in Theorem \ref{thm: coarseness 2}.

\begin{notation} \label{not: free product setup}
	Let $n \geq 1$.  For $j=1, \dots, n$, let $(A,\varphi)$ be a von Neumann algebra with faithful normal state and $B_j \subset B$ a von Neumann subalgebra with state-preserving conditional expectation $E_{B_j}$; let $\varphi_j = \varphi|_{B_j}$.  Let $(C_j,\psi_j)$ be another von Neumann algebra with faithful normal state.  Define von Neumann algebras $M_0 \subset M_1 \subset \dots \subset M_n$ with states $\omega_j$ inductively by
	\begin{align*}
		(M_0,\omega_0) &= (A,\varphi) \\
		(M_{j+1},\omega_{j+1}) &= (M_j,\omega_j) *_{(B_j,\varphi_j)} [(B_j,\varphi_j) \ootimes (C_j,\psi_j)],
	\end{align*}
	where $(B_j,\varphi_j) \subset (A,\varphi) = (M_0,\omega_0)$ is regarded as a subalgebra of $(M_j,\omega_j)$ in the natural way, and we also use the natural inclusion of $(B_j,\varphi_j)$ into $(B_j,\varphi_j) \ootimes (C_j,\psi_j)$ with the conditional expectation given by $\id \otimes \psi_j$.  We also regard $(A,\varphi)$, $(B_j,\varphi_j)$, and $(C_j,\psi_j)$ as subalgebras of $(M_n,\omega_n)$ in the natural way.
\end{notation}

\begin{thm} \label{thm: coarseness 1}
	Consider the setup of Notation \ref{not: free product setup}.  Then $C_1$, \dots, $C_n$ are freely independent in $(M_n,\omega_n)$.  This induces an embedding of $(C,\psi) = (C_1,\psi_1) * \dots * (C_n,\psi_n)$ into $(M_n,\omega_n)$, which is with expectation.  Moreover, there exists an isomorphism of $C$-$A$-bimodules
	\[
	{}_C L^2(M_n,\omega_n)_A \to {}_C L^2(C,\psi) \otimes_{\mathbb{C}} H_{A}
	\]
	for some right $A$-module $H$.
\end{thm}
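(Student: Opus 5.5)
We will prove the statement by induction on $n$, establishing a stronger normal-form statement: the subspace of $L^2(M_n,\omega_n)$ spanned by reduced words can be organized as $\bigoplus_{w} C^{\circ}_w \cdot K_w$, where $w$ ranges over reduced words $j_1\neq\dots\neq j_k$ in $\{1,\dots,n\}$, $C^\circ_w = (C_{j_1}\ominus\C)\cdots(C_{j_k}\ominus\C)$, and the spaces $K_w$ are all copies of a single right $A$-module $H$. Concretely, we take $H$ to be the closure of the span of the ``$C$-free on the left'' words, namely those reduced words in $M_n$ whose leftmost letter does not come from any $C_j\ominus\C$. This includes $A$ and words beginning with a letter of $M_{j}\ominus B_j$. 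The map we want is
\[
\Phi : c\otimes \xi \mapsto c\,\xi , \qquad c \in C^\circ_w \text{ (or } c\in\C),\ \xi\in H,
\]
and the claim is that $\Phi$ extends to a unitary $L^2(C,\psi)\otimes H\to L^2(M_n)$. It is evidently a left $C$-module map and a right $A$-module map.

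The key steps, in order, are as follows.

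\textbf{Step 1.} We describe $L^2(M_{j+1})$ via the amalgamated free product decomposition over $B_j$. The algebra $B_j\ootimes C_j$ decomposes as
\[
L^2(B_j\ootimes C_j)\ominus L^2(B_j) = L^2(B_j)\otimes (L^2(C_j)\ominus\C).
\]
Consequently every reduced letter from the new factor can be written, up to $L^2$-approximation, as $b\,c$ with $b\in B_j$ and $c\in C_j\ominus\C$. Since $b$ and $c$ commute, the letter can equally be written as $c\,b$. This lets us push $C_j$-letters to the left of words.

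\textbf{Step 2.} We check that $\Phi$ is isometric. The essential computation is that for $c,c'\in C_j\ominus \C$ and $\xi,\eta\in H$ we have
\[
\langle c\xi, c'\eta\rangle = \psi_j(c^*c')\,\langle\xi,\eta\rangle .
\]
Here we use the conditional expectation $\id\otimes\psi_j$. The product $c^*c'$ splits as $\psi_j(c^*c') + (c^*c')^\circ$, and the second term followed by the reduced word $\eta$ remains a reduced word that is orthogonal to $\xi$. The same argument applied across different indices $j$ shows that words with different $C$-prefixes are orthogonal. This simultaneously proves that $C_1,\dots,C_n$ are freely independent with respect to $\omega_n$. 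Indeed, a product of centered letters alternating in the $C_j$ is a reduced word (each letter lies in $(B_j\ootimes C_j)\ominus B_j$, and consecutive letters come from different stages of the iterated construction), hence it has zero expectation onto $A$ and in particular zero state. The expectation onto $C$ is then obtained as the compression by the projection onto $L^2(C)\otimes\C\Omega$ inside this decomposition, or alternatively by the standard Takesaki criterion, since $C$ is globally invariant under the modular group because each $C_j$ is.

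\textbf{Step 3.} We prove that $\Phi$ is surjective. Every reduced word in $M_n$, after commuting its $B_j$-components past the $C_j$-components inside the letters coming from the new factors, must be rewritten so that its maximal leading block consists of $C$-letters, followed by an element of $H$. The main obstacle lies here. The difficulty is that a word in $M_{j+1}$ has letters from $M_j\ominus B_j$, and these letters themselves involve $C_i$ for $i<j$. As a result, a leading letter $x\in M_j\ominus B_j$ may already begin with a $C_i$-block, which must be extracted recursively and shown to be compatible with the free product structure, rather than hidden inside an $M_j$-letter.

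To handle this, we first apply the induction hypothesis to $M_j$. Write
\[
L^2(M_j)\ominus L^2(B_j) = \bigl(L^2(C^{(j)})\otimes H_j\bigr)\ominus L^2(B_j),
\]
and observe that $L^2(B_j)\subset \C\otimes H_j$, because $B_j\subset A$. The leading $C^{(j)}$-block of an $M_j$-letter therefore contributes to $C^\circ_w$. Moreover, whenever that block is nontrivial, the remainder is automatically a centered reduced word of the required type. Tracking this rewriting carefully, with the bookkeeping that a nonempty $C^{(j)}$-prefix forces the letter to be orthogonal to $B_j$, yields the required decomposition at level $j+1$ with
\[
H_{j+1} = H_j \oplus \{\text{new words whose first letter is a } B_j\text{-part of a new letter}\}.
\]
Verifying orthogonality between these pieces is the delicate combinatorial point.
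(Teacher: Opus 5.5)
Your outline targets the right map $c\otimes\xi\mapsto c\xi$, and the modular-group/Takesaki argument for the expectation onto $C$ and the bimodule property of $\Phi$ agree with the paper. However, the two steps that actually carry the proof are missing, and the mechanism you sketch for them fails.

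First, $H$ is never pinned down. ``Reduced words whose leftmost letter does not come from any $C_j\ominus\C$'' is not a well-defined criterion. Letters of $M_{n-1}\ominus B_n$ such as $c$ or $ca$ (with $c\in C_1\ominus\C$, $a\in A$) carry hidden $C$-prefixes, and putting them in $H$ already breaks injectivity, e.g.\ $\Phi(c\otimes 1)=\Phi(1\otimes c)$.

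Your recursive repair in Step 3 also fails. It is not true that once the $C^{(j)}$-block of the first letter is extracted, ``the remainder is automatically a centered reduced word of the required type''. Take $n=2$, $c\in C_1\ominus\C$, $c'\in C_2\ominus\C$ and $a\in A$. Then $(ca)\,c'$ is a reduced word in $M_1*_{B_2}(B_2\ootimes C_2)$, since $E_{B_2}[ca]=E_{B_2}[E_A[c]\,a]=0$. But
\[
(ca)\,c' = c\,(a-E_{B_2}[a])\,c' + c\,c'\,E_{B_2}[a],
\]
so the remainder $ac'$ is not $C$-free on the left, and the $C$-prefix of this word is a genuine mixture of $c$ and $cc'$. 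The $B$-components of $A$-letters commute past later $C$-letters and lengthen the prefix.

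This is why the paper drops nested reduced words and uses the flat spanning family of Lemma~\ref{lem: word decomposition}. These are alternating products of elements of $A$ and centered elements of the $C_j$'s, where an $A$-letter followed by a $C_j$-letter must satisfy $E_{B_j}[x_i]=0$. $H$ is then the closed span of those products beginning with an $A$-letter. Case 2b of that lemma is precisely the rewriting you call delicate and leave undone.

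Second, your isometry argument rests on ``$(c^*c')^\circ\eta$ remains a reduced word orthogonal to $\xi$'' and on ``the same argument'' across different indices. That is, it assumes a free-product-style principle that distinct normal forms are orthogonal, and that principle is false here. If $a_1\in A$ with $E_{B_k}[a_1]=0$, $a_2\in B_k$ and $c\in C_k\ominus\C$, then $a_1ca_2=(a_1a_2)c$ is a normal form both for the word $0k0$ and for the word $0k$. Moreover, in $\xi^*d\eta$ with $d\in C_j\ominus\C$ and $\xi,\eta\in H$, the $A$-letters adjacent to $d$ need not be $B_j$-centered. So this product is not reduced in any amalgamated free product decomposition.

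The missing key lemma is Lemma~\ref{lem: word decomposition 2}. It says that a product still has $E_A=0$ as soon as some $C$-letter occurs, even when an $A$-letter is required to be $B_j$-centered only if it sits between two $C_j$-letters (condition (c')). It is proved by induction on the largest index $m$ present: regroup the product around the occurrences of $C_m$ and use freeness with amalgamation over $B_m$ in $M_m=M_{m-1}*_{B_m}(B_m\ootimes C_m)$. Your freeness claim for the $C_j$'s needs exactly this regrouping too; ``consecutive letters come from different stages'' does not by itself make the word reduced.

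Given that lemma, the inner-product formula of Lemma~\ref{lem: word decomposition 3} follows by induction on the lengths of the $C$-prefixes, splitting $x_1^*y_1=\psi(x_1^*y_1)+(x_1^*y_1)^\circ$ as in your one-letter case. Surjectivity then follows from the spanning lemma. Without these ingredients, your proposal proves neither that $\Phi$ is isometric nor that it is onto.
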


\begin{remark}
In special cases, this theorem overlaps with the bimodule computations for graph products in \cite[\S 5]{charlesworth2023strong2}.  Namely, if $A$ is the graph product of the algebras $B_j$ over some graph on vertex set $[n]$, then $(M_n,\omega_n)$ would be the graph product of the $B_j$'s and the $C_j$'s with respect to a graph on vertex set $[2n]$ obtained by adding for each $j \in [n]$ an extra vertex $n+j$ adjacent to $j$.  Then $A$ and $C$ would be subalgebras induced from disjoint subgraphs, and so coarseness would follow from  \cite[Theorem 5.4]{charlesworth2023strong2}.  The computation here is thus inspired by the graph product case, but significantly more general since in the present setting there is not a canonical or unique word decomposition of elements of $M_n$.
\end{remark}

\begin{lem} \label{lem: word decomposition}
	Consider the setup of Notation \ref{not: free product setup}.  For each alternating word $w = j_1 \dots j_\ell$ on the alphabet $\{0,\dots,n\}$, let $K_w$ be the span of products of the form $x_1 \dots x_\ell$, where
	\begin{enumerate}[(a)]
		\item if $j_i > 0$, then $x_i \in C_{j_i}$ with $\psi_{j_i}(x_i) = 0$;
		\item if $j_i = 0$, then $x_i \in A$;
		\item if $j_i = 0$ and $i < \ell$, then $E_{B_{j_{i+1}}}[x_i] = 0$.
	\end{enumerate}
	Then the $K_w$'s span a weakly dense subset of $M_n$.
\end{lem}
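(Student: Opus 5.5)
Let $\mathcal{K}$ denote the linear span of all the $K_w$ (including the empty word, which gives $\C 1$; note that $K_0 = A$). I will prove by induction on $m \in \{0,\dots,n\}$ that the span $\mathcal{K}^{(m)}$ of those $K_w$ with $w$ on the alphabet $\{0,\dots,m\}$ is weakly dense in $M_m$. Since $M_n$ is generated by $A$ and $C_1,\dots,C_n$, it would also suffice to show that $\mathcal{K}$ contains these generators and is a $*$-algebra. However, checking closure under multiplication directly is messy, so I will instead use the amalgamated free product description of $M_{m+1}$ from \S2.

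The base case $m=0$ is immediate, since $K_0 = A = M_0$. For the inductive step, recall that $M_{m+1} = M_m *_{B_m} (B_m \ootimes C_m)$. I am writing $B_m$ for the subalgebra attached to the letter $m+1$, consistent with the lemma's condition $E_{B_{j_{i+1}}}$ when $j_{i+1} = m+1$; the indexing should be read with this shift. By \S2, $M_{m+1}$ is weakly densely spanned by $B_m$ together with reduced words alternating between $M_m \ominus B_m$ and $(B_m \ootimes C_m) \ominus B_m$.

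The algebraic tensor product is dense in $B_m \ootimes C_m$, and $E_{B_m} = \id\otimes\psi$. Hence $(B_m\ootimes C_m)\ominus B_m$ is weakly densely spanned by elements $b\,c$ with $b \in B_m \subset A$ and $c \in C_{m+1}^\circ$, where $C^\circ$ denotes the $\psi$-centered part (again with the index shift). Since $B_m$ commutes with $C$ inside the tensor product, I can move each $b$ to the left. A reduced word therefore becomes a product of factors $y \in M_m\ominus B_m$ and single letters $c$. Each consecutive chunk $y\,b$ (or a leading $b$) lies in $M_m$.

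Next I expand each element of $M_m$ appearing between two $c$'s as $z = E_{B_m}(z) + (z - E_{B_m}(z))$. The component $E_{B_m}(z) \in B_m$ commutes with the adjacent $c$, so it can be pushed left and absorbed into the previous $M_m$ chunk. Processing the chunks from right to left, this terminates. The result is a finite sum of products
\[
z_0\, c_1\, z_1\, c_2 \cdots c_k\, z_k,
\]
where $z_i \in M_m$ with $E_{B_m}(z_i)=0$ for $i<k$, except that $z_0$ and $z_k$ may be arbitrary or absent. (Note that the condition $E_{B_m}(z_{i})=0$ is only needed for the $z_i$ that precede a $c$.)

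Now I apply the induction hypothesis: each $z_i$ is approximated by elements of $\mathcal{K}^{(m)}$, that is, by sums of words $x_1\cdots x_\ell$ ending in some letter. Substituting these in, I get concatenations in which a $C_{m+1}$-letter is adjacent to words on $\{0,\dots,m\}$. There are two cases for the letter preceding $c$:
\begin{enumerate}[(a)]
\item If that letter is a $C_j$-letter with $j\le m$, the concatenation is already alternating and the conditions of the lemma hold.
\item If that letter is an $A$-letter $x$, I need $E_{B_m}(x)=0$. I only know $E_{B_m}(z)=0$ for the whole $z$. To fix this, I split $x = E_{B_m}(x) + x^\circ$ and commute $E_{B_m}(x) \in B_m$ past $c$.
\end{enumerate}

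This is exactly the delicate step I expect to be the main obstacle. After commuting $E_{B_m}(x)$ past $c$, it lands to the right of $c$ and merges with the next $A$-letter, or forms a new $A$-letter. The leftover word before $c$ may then end in a $C_j$-letter, which is fine, or become shorter. I will organize this as an induction on the total length of the word together with the number of $c$'s, so that each such move strictly decreases a complexity measure (for instance, the length of the prefix before the last $c$ that still violates condition (c)). Condition (c) only refers to the letter immediately following, so merging letters on the right causes no new violations except possibly at the merged $A$-letter. That $A$-letter is followed by a $C$-letter with index $\ne m+1$ or by $c_{m+1}$, and it gets handled in the next pass from the right. Weak continuity of multiplication in each variable separately, applied to bounded approximants (via Kaplansky density), lets me pass the density through the finitely many products. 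This yields that $\mathcal{K}^{(m+1)}$ is weakly dense in $M_{m+1}$.
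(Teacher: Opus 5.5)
There is a genuine gap, and it sits exactly at the step you flag as the main obstacle. I use the lemma's indexing, so the new letter $m+1$ is paired with $B_{m+1}$. After you substitute approximants $\xi_i\in\mathcal{K}^{(m)}$ for the chunks, you split the $A$-letter $x$ in front of a centred $c\in C_{m+1}$. The element $E_{B_{m+1}}(x)$ that you commute past $c$ then lands at the left end of $\xi_{i+1}$, and that chunk no longer has the $\mathcal{K}^{(m)}$ form:
\begin{itemize}
\item If $\xi_{i+1}$ starts with a centred $y\in C_j$, $j\le m$, you create a new $A$-letter $E_{B_{m+1}}(x)$ in front of $y$. Condition (c) then demands $E_{B_j}[E_{B_{m+1}}(x)]=0$, which fails whenever $\varphi(x)\neq 0$.
\item If $\xi_{i+1}$ starts with an $A$-letter $a$, the merged letter $E_{B_{m+1}}(x)a$ need not satisfy (c) relative to the following $C_j$-letter, even though $a$ did.
\end{itemize}
These are violations in front of $C_j$-letters with $j\le m$. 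Your procedure splits only in front of the $c$'s, so it never touches them, and repairing them pushes $E_{B_j}$-parts further right, which can cascade.

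Your proposed measures do not control this. Take $x\,c\,y\,a'$ with $x,a'\in A$. The term $c\,E_{B_{m+1}}(x)\,y\,a'$ has the same length and the same number of $c$'s, and has no violation before the last $c$. Yet it does not satisfy (c) for its word $(m{+}1)\,0\,j\,0$.

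You also do not treat a chunk between two $c$'s that vanishes (a scalar, or a single $A$-letter after splitting). This leaves adjacent $c_ic_{i+1}$, which must be merged and recentred. Your earlier ``push left'' step creates the same situation. That step is unnecessary anyway, since for a reduced word $E_{B_{m+1}}(y_ib_{i+1})=E_{B_{m+1}}(y_i)b_{i+1}=0$.

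More fundamentally, the amalgamated free product decomposition and the induction on $m$ do not reduce the difficulty. After substitution you face arbitrary products of letters from $A,C_1,\dots,C_{m+1}$, which is precisely the rewriting problem of the lemma. The paper solves it directly, using only that $M_n$ is generated by $A,C_1,\dots,C_n$. It shows by induction on $\ell$ that the span of products of at most $\ell$ such letters equals the span of the $K_w$ with $|w|\le\ell$, appending the new letter $x_\ell$ on the \emph{right}. Then:
\begin{itemize}
\item the only possible failure of (c) is at $x_{\ell-1}$;
\item the part $E_{B_{j_\ell}}[x_{\ell-1}]$ commuted past $x_\ell$ lands at the right end, where (c) imposes nothing;
\item any merge (such as $x_{\ell-2}x_\ell$ when $j_{\ell-2}=j_\ell$, or recentring) shortens the product and is covered by the induction hypothesis.
\end{itemize}
No weak approximation is needed beyond generation. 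If you prefer to repair violations in the middle of a word, one measure that works is the lexicographic pair of the length $L$ and $\sum_p (L-p)$, summed over the positions $p$ violating (c). Each split either shortens the word, or at equal length removes a violation or moves it one place to the right. With either fix, the detour through $M_{m+1}=M_m*_{B_{m+1}}(B_{m+1}\ootimes C_{m+1})$ becomes superfluous.
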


\begin{proof}
	An easy induction shows that $M_j$ is generated by $A$ and $B_{j'} \ootimes C_{j'}$ for $j' \leq j$.  Since $B_j \subset A$, $M_n$ is clearly also generated by $A$, $C_1$, \dots, $C_n$.  Hence, a weakly dense subset of $M_n$ is spanned by products $x_1 \dots x_\ell$ where $x_i$ is either in $A$ or one of the $C_j$'s.
	
	We claim that the span of products of length $\leq \ell$ of elements of $A$ and $C_1$, \dots, $C_n$ is equal to the span of the $K_w$'s where $w$ is alternating word on $\{0,\dots,n\}$ of length $\leq \ell$.  We proceed by induction on $\ell$.  The base case $\ell = 1$ is immediate since an element of $A$ is already an element of $A$, and an element of $x \in C_j$ can be expressed as $(x - \psi_j(x)) + \psi_j(x)1$, where $x - \psi_j(x)$ is in $\ker(\psi_j)$ and $\psi_j(x)1 \in A$.
	
	For the induction step, consider a product $x_1 \dots x_\ell$ of elements from $A$ and $C_1$, \dots, $C_n$, with $\ell \geq 2$.  By induction hypothesis, $x_1 \dots x_{\ell - 1}$ is in the span of words of the $K_w$'s for alternating words $w$ of length $\leq \ell - 1$. For each term that has length $\leq \ell - 2$, multiplying it by $x_\ell$ on the right yields a product of length $\leq \ell - 1$, which is already handled by the induction hypothesis. Hence, we may assume without loss of generality that $x_1 \dots x_{\ell - 1}$ satisfies the defining conditions of $K_w$ for some $w$ of length $\ell - 1$.
	
	If $x_\ell$ comes from the same algebra as $x_{\ell - 1}$ (either $A$ or $C_1$, \dots, $C_n$), then we can view $x_{\ell - 1} x_\ell$ as a single element in the product, and so $x_1 \dots x_\ell$ is a word of length $\leq \ell - 1$, which is already handled by the induction hypothesis.  Therefore, assume that $x_\ell$ and $x_{\ell - 1}$ come from different algebras.  Thus, there is an alternating word $w = j_1 \dots j_\ell$ satisfying that $x_i \in C_{j_i}$ when $j_i > 0$ and $x_i \in A$ when $i = 0$. 
	
	\textbf{Case 1:} Suppose that $j_\ell = 0$ so that $x_\ell \in A$.  If $x_1 \dots x_{\ell-1}$ satisfies conditions (a)-(c) for the word $j_1 \dots j_{\ell-1}$, then $x_1 \dots x_\ell$ satisfies conditions (a)-(c) for $j_1 \dots j_\ell$.  Indeed, there is nothing more to check for condition (a) since $j_\ell = 0$.  There is nothing more to check for condition (b) since $j_\ell = 0$ and $x_\ell \in A$.  There is nothing more to check about $x_{\ell-1}$ for condition (c) since $i_{\ell-1} \neq 0$.  Hence, $x_1 \dots x_\ell \in K_w$.
	
	\textbf{Case 2:} Next, suppose that $j_\ell > 0$ and so $x_\ell \in C_{j_\ell}$.  We can write $x_\ell = \mathring{x_\ell} + \psi_{j_\ell}(x_\ell) 1$ where $\mathring{x_\ell} \in \ker(\psi_{j_\ell})$.  Then $x_1 \dots x_{\ell-1} \psi_{j_\ell}(x_\ell) $ is a product of length $\ell - 1$, hence already handled by the induction hypothesis.  Thus, assume without loss of generality that $x_\ell = \mathring{x_\ell} \in \ker(\psi_{j_\ell})$.  This means that $x_1 \dots x_\ell$ already satisfies conditions (a) and (b).  
	
	\textbf{Case 2a:} Suppose that $i_{\ell-1} > 0$.   Then there is nothing more to check for condition (c) and so $x_1 \dots x_\ell \in K_w$ as desired.
	
	\textbf{Case 2b:} Suppose that $i_{\ell-1} = 0$.  Then write $x_{\ell-1} = \mathring{x_{\ell-1}} + E_{B_{j_\ell}}[x_{\ell-1}]$, where $E_{B_{j_\ell}}[\mathring{x_{\ell-1}}] = 0$.  Then $x_1 \dots x_{\ell-2} \mathring{x_{\ell-1}} x_\ell$ satisfies condition (c) and hence is in $K_w$.  It remains to shows that $x_1 \dots x_{\ell-2} E_{B_{j_\ell}}[x_{\ell-1}] x_\ell$ is in the span of the $K_w$'s.  Note that $E_{B_{j_\ell}}[x_{\ell-1}]$ commutes with $x_\ell \in C_{j_\ell}$ since $B_{j_\ell}$ and $C_{j_\ell}$ are in tensor position. Hence,
	\[
	x_1 \dots x_{\ell-2} E_{B_{j_\ell}}[x_{\ell-1}] x_\ell = x_1 \dots x_{\ell-2} x_\ell E_{B_{j_\ell}}[x_{\ell-1}].
	\]
	If $\ell \geq 3$ and $j_{\ell-2} = j_\ell$, then $x_1 \dots x_{\ell-3}(x_{\ell-2} x_\ell) E_{B_{j_\ell}}[x_{\ell-1}]$ becomes a product of length $\ell - 1$ handled by the induction hypothesis, by viewing $x_{\ell-2} x_\ell$ as a single element. Otherwise, $x_1 \dots x_{\ell-2} x_\ell$ satisfies conditions (a)-(c), and hence so does $x_1 \dots x_{\ell-2} x_\ell E_{B_{j_\ell}}[x_{\ell-1}]$ by Case 1.
\end{proof}

The following is well known:
\begin{lem} \label{lem: amalgamated free product decomposition}
	Let $(B,\psi) \subset (A,\varphi)$ and $(B,\psi) \subset (C,\rho)$ be state-preserving inclusions with expectations.  Let
	\[
	(M,\omega) = (A,\varphi) *_{(B,\psi)} (C,\rho).
	\]
	Consider a product $a_0 c_1 \dots a_{\ell-1} c_\ell a_\ell$ where
	\begin{enumerate}[(a)]
		\item $\ell \geq 1$
		\item $a_j \in A$.
		\item $E_B[a_j] = 0$ for $0 < j < \ell$.
		\item $c_j \in C$ with $E_B[c_j] = 0$ for $j = 1$, \dots, $\ell$.
	\end{enumerate}
	Then $E_A[a_0 c_1 a_1 \dots c_\ell a_\ell] = 0$.
	
	Let $a_0' c_1' \dots a_{m-1}' c_m' a_m'$ be another product satisfying the analogous conditions.
	\begin{enumerate}[(1)]
		\item If $m = \ell$, then
		\begin{align*}
			&E_A[a_\ell^* c_{\ell-1}^* a_{\ell-1}^* \dots c_1^* a_0^* a_0' c_1' \dots a_{\ell-1}'c_{\ell-1}' a_\ell']\\
			= &a_\ell^* E_B[c_{\ell-1}^*E_B[a_{\ell-1}^* \dots E_B[c_1^* E_B[a_0^* a_0'] c_1'] \dots a_{\ell-1}'] c_{\ell-1}'] a_\ell'.
		\end{align*}
		\item If $m \neq \ell$, then
		\[
		E_A[a_\ell^* c_{\ell-1}^* a_{\ell-1}^* \dots c_1^* a_0^* a_0' c_1' \dots a_{m-1}'c_{m-1}' a_m'] = 0.
		\]
	\end{enumerate}
\end{lem}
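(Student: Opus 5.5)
The plan is to reduce everything to the defining freeness property of $M = A *_B C$: for $k \geq 1$, every reduced word $y_1 \cdots y_k$ with letters alternately from $A \ominus B$ and $C \ominus B$ and containing at least one letter from $C \ominus B$ lies in a summand $(L^2(\cdot)\ominus L^2(B))\otimes_B\cdots\otimes_B(L^2(\cdot)\ominus L^2(B))$ of $L^2(M)$. That summand is orthogonal to $L^2(A) = L^2(B)\oplus (L^2(A)\ominus L^2(B))$. Since $E_A$ is the restriction to $M$ of the orthogonal projection onto $L^2(A)$, $E_A$ kills all such words. I will use this fact as the basic input from the construction recalled from \cite{Popa93}.

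\textbf{Vanishing of $E_A$.} Since $E_A$ is $A$-bimodular, $E_A[a_0 c_1 a_1 \cdots c_\ell a_\ell] = a_0\, E_A[c_1 a_1 \cdots a_{\ell-1} c_\ell]\, a_\ell$. The word $c_1 a_1 \cdots a_{\ell-1} c_\ell$ is reduced by hypotheses (c) and (d), and it contains $\ell \geq 1$ letters from $C \ominus B$. By the basic fact above, $E_A$ of it is zero.

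\textbf{Computing $E_A$ of the product of two words.} Set $X = c_1 a_1 \cdots a_{\ell-1}c_\ell$ and $X' = c_1' a_1' \cdots a_{m-1}' c_m'$. By bimodularity the quantity to compute is $a_\ell^*\, E_A[X^* (a_0^* a_0') X']\, a_\ell'$. I will evaluate $E_A[X^* y X']$ for $y \in A$ by induction on $\min(\ell,m)$, peeling letters from the middle.
\begin{itemize}
\item Split $y = E_B[y] + \mathring{y}$. The term $X^* \mathring{y} X'$ is a reduced word with $C$-letters at both ends, so $E_A$ kills it.
\item In the remaining term, $c_1^* E_B[y] c_1' \in C$. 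Split it again as $E_B[c_1^* E_B[y] c_1'] + (\text{mean-zero part})$. The mean-zero part produces a reduced word, because the neighbouring letters $a_1^*$ and $a_1'$ have zero expectation onto $B$. That word contains at least one $C$-letter, so $E_A$ kills it.
\item The surviving term has $b = E_B[c_1^* E_B[y] c_1'] \in B$ in the middle. Absorb $b$ into $a_1^*$ or $a_1'$; since $b \in B$, we have $E_B[a_1^* b] = E_B[a_1^*]b = 0$, so the word stays reduced. Now recurse with $y$ replaced by $a_1^* b a_1'$.
\end{itemize}
Iterating this produces exactly the nested conditional expectation $E_B[\cdots E_B[c_1^* E_B[a_0^* a_0'] c_1'] \cdots]$ in (1), with the last step giving an element of $B \subset A$ that $E_A$ fixes. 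If $m \neq \ell$, say $\ell < m$, then after $\ell$ peeling steps the element is $b\, a_\ell' c_{\ell+1}' \cdots c_m'$ (up to absorbing $b$). This is a reduced word with at least one $C$-letter, so $E_A$ vanishes, which gives (2). The case $\ell > m$ is symmetric.

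\textbf{Main obstacle.} The only delicate point is bookkeeping. At each splitting step I must check that the word remains reduced: the absorbed $B$-element must not destroy $E_B(\cdot)=0$ of the adjacent letters, which holds because $E_B$ is $B$-bimodular. I must also check that the discarded pieces genuinely contain a $C \ominus B$ letter, so that the orthogonality argument applies. I expect the indices in (1) to be read as the natural alternating pattern $a_\ell^* c_\ell^* \cdots$.
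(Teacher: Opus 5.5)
Your proof is correct. The paper gives no proof of this lemma; it only calls it well known. Your peeling induction is the standard argument one would supply: split $y = E_B[y] + \mathring{y}$, then split $c_j^* E_B[y] c_j'$ into its $B$-part plus a part in $C \ominus B$, and discard the reduced words that contain a $C$-letter. Your handling of $m \neq \ell$, by absorbing the leftover $b \in B$ into the interior letter $a_\ell'$ (or $a_m^*$), is also correct. So is your reading of the indices in (1): the letters $c_{\ell-1}^*$ and $c_{\ell-1}'$ adjacent to $a_\ell^*$ and $a_\ell'$ are a typo for $c_\ell^*$ and $c_\ell'$.

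One optional tightening. You take for granted that $E_A$ kills reduced words containing a $C$-letter, via the Hilbert space picture. This also follows directly from $E_B$ vanishing on reduced words. Splitting $a^* y_1$ when needed shows $\omega(a^*x) = 0$ for every $a \in A$. Taking $a = E_A[x]$ and using $\omega \circ E_A = \omega$ together with faithfulness then gives $E_A[x] = 0$.
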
	

\begin{lem} \label{lem: word decomposition 2}
	Consider the setup of Notation \ref{not: free product setup}, and let $K_w$ be as in Lemma \ref{lem: word decomposition}.  Suppose $w = j_1 \dots j_\ell$ is an alternating word on $\{0,\dots,n\}$.  Consider a product $x_1 \dots x_\ell$ satisfying
	\begin{itemize}
		\item[\emph{(a)}] if $j_i > 0$, then $x_i \in C_{j_i}$ with $\psi_{j_i}(x_i) = 0$;
		\item[\emph{(b)}] if $j_i = 0$, then $x_i \in A$;
		\item[\emph{(c')}] if $1 < i < \ell$ with $j_i = 0$ and $j_{i+1} = j_{i-1}$, then $E_{B_{j_{i-1}}}[x_i] = 0$.
	\end{itemize}
	Then
	\begin{enumerate}[(1)]
		\item If the word $w$ contains the letter $n$ at least once, then $E_{M_{n-1}}[x_1 \dots x_\ell] = 0$.
		\item Moreover, if the word $w$ contains any nonzero letter, then $E_A[x_1 \dots x_\ell] = 0$.
	\end{enumerate}
\end{lem}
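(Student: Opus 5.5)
Part (2) follows from part (1) by induction on $n$, so the plan is to prove (1) and then derive (2). The tool is Lemma \ref{lem: amalgamated free product decomposition}, applied to $M_n = M_{n-1} *_{B_n} (B_n \ootimes C_n)$ with the roles $A \leftarrow M_{n-1}$, $B \leftarrow B_n$, $C \leftarrow B_n \ootimes C_n$.

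\textbf{Part (1).} Fix $x_1\dots x_\ell$ as in the statement, with $w$ containing the letter $n$. Let $i_1<\dots<i_m$ be the positions with $j_{i_k}=n$, where $m\ge 1$. The letters of $w$ strictly between consecutive occurrences of $n$, and those before the first and after the last, are all different from $n$. So the product can be grouped as
\[
a_0\, x_{i_1}\, a_1\, x_{i_2} \cdots x_{i_m}\, a_m ,
\]
where each $a_k$ is the product of the $x_i$ lying between these positions. Every such $x_i$ lies in $A$ or in some $C_j$ with $j<n$, so each $a_k \in M_{n-1}$; possibly $a_0=1$ or $a_m=1$.

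We check the hypotheses of Lemma \ref{lem: amalgamated free product decomposition}:
\begin{itemize}
\item[(d)] Each $x_{i_k}\in C_n$ with $\psi_n(x_{i_k})=0$. Hence $E_{B_n}(x_{i_k})=(\id\otimes\psi_n)(1\otimes x_{i_k})=0$.
\item[(c)] For $0<k<m$ we need $E_{B_n}[a_k]=0$. Since $w$ is alternating and $n$ appears at positions $i_k$ and $i_{k+1}$, the middle segment $a_k$ is nonempty.
\end{itemize}

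\textbf{Condition (c) for a middle segment.} There are two cases.
\begin{itemize}
\item If $a_k$ is a single letter, it has $j_i=0$ with $j_{i-1}=j_{i+1}=n$. Then hypothesis (c') gives $E_{B_n}[x_i]=0$ directly.
\item If $a_k=x_{i_k+1}\cdots x_{i_{k+1}-1}$ has length at least $2$, note that $E_{B_n}=E_{B_n}\circ E_A$ on $M_{n-1}$, because $B_n\subset A$ and the expectations are state-preserving. So it suffices to show $E_A[a_k]=0$.
\end{itemize}
The word of $a_k$ begins and ends with $0$ (it is flanked by $n$), and since its length is at least $2$ it contains a nonzero letter less than $n$. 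Condition (c') holds for $a_k$ as a subword: the interior zeros of $a_k$ are interior in $w$ with the same neighbours. So $E_A[a_k]=0$ is exactly statement (2) for $M_{n-1}$ with the subword. This is why I run an induction on $n$ and prove (1) and (2) together.

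With (c) and (d) verified, Lemma \ref{lem: amalgamated free product decomposition} gives $E_{M_{n-1}}[x_1\dots x_\ell]=0$, which is (1).

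\textbf{Part (2).} If $w$ contains $n$, then $E_A=E_A\circ E_{M_{n-1}}$ and (1) gives $E_A[x_1\dots x_\ell]=0$. If $w$ does not contain $n$, the product lies in $M_{n-1}$ and the conditions are unchanged, so the induction hypothesis for $n-1$ applies. The base case $n=0$ is vacuous, since then $w$ has no nonzero letter. Note the induction is on the number of handles: the setup for $n-1$ is just the truncation of Notation \ref{not: free product setup}.

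\textbf{Main obstacle.} The delicate step is checking condition (c) for the middle segments $a_k$. It is exactly where the weakened hypothesis (c') replaces (c), and where the inductive statement (2) must be applied to subwords. One must confirm that each subword still satisfies (a), (b) and (c') inside $M_{n-1}$; the relevant boundary positions are its endpoints, which (c') does not constrain. One must also confirm that the length-one case is exactly what (c') covers.
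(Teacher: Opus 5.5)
Your overall route is the paper's. You induct on $n$, cut $x_1\dots x_\ell$ at the occurrences of the letter $n$, and apply Lemma~\ref{lem: amalgamated free product decomposition} to $M_n = M_{n-1} *_{B_n} (B_n \ootimes C_n)$. Each middle segment is then killed either by (c') or by the induction hypothesis. You use (2) at level $n-1$ together with $E_{B_n}=E_{B_n}\circ E_A$; the paper uses (1) at level $m(t)$, the largest letter of the segment, together with $B_n\subset A\subset M_{m(t)-1}$. These are interchangeable.

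The flaw is in your case split for the middle segments $a_k$. \emph{Alternating} only means that adjacent letters differ, so nonzero letters may sit next to each other. A segment of length one between two consecutive $n$'s therefore need not be the letter $0$. For $n\ge 2$ the word $w=n\,1\,n$ is allowed, and then $a_k=x_i\in C_1$ with $\psi_1(x_i)=0$. Condition (c') says nothing about this case, and your length-one branch wrongly excludes it.

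Likewise, a longer segment need not begin or end with $0$ (e.g.\ $n\,1\,0\,2\,n$ for $n\ge 3$). That misstatement is harmless, since you only use that a segment of length $\ge 2$ contains a nonzero letter, which follows from alternation alone.

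The repair is to split as the paper does, by content rather than by length:
\begin{itemize}
\item If $a_k$ contains a nonzero letter, of whatever length, your inductive argument applies verbatim and gives $E_A[a_k]=0$, hence $E_{B_n}[a_k]=0$. For a single $x_i\in C_j$ with $\psi_j(x_i)=0$ and $j<n$, the one-letter word trivially satisfies (a), (b), (c').
\item Otherwise $a_k$ is a single $0$ flanked by $n$'s, and (c') gives $E_{B_n}[a_k]=0$.
\end{itemize}
With this change the proof is complete.
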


\begin{proof}
	(1) We proceed by induction on $n$.  The proof of the base case $n = 1$ and the proof of the induction step will be handled at the same time.
	
	Let $n \geq 1$ and consider $x_1 \dots x_\ell$ as above.  Let $i(1) < \dots < i(k)$ be the indices where the letter $n$ appears in $w$.  By hypothesis $E_{B_n}[x_{i(t)}] = \psi_n(x_{i(t)}) = 0$ for $t = 1$, \dots, $k$.
	
	We also claim that $E_{B_n}[x_{i(t)+1} \dots x_{i(t+1)-1}] = 0$ for $t = 1$, \dots, $k-1$.  Let $m(t) = \max(j_{i(t)+1}, \dots, j_{i(t+1)-1})$ be the maximum index that appears in the subword $j_{i(t)+1} \dots j_{i(t+1)-1}$.  Note $m(t) < n$.  If $m(t) > 0$, then by induction hypothesis, $E_{M_{m(t)-1}}[x_{i(t)+1} \dots x_{i(t+1)-1}] = 0$ and therefore $E_{B_n}[x_{i(t)+1} \dots x_{i(t+1)-1}] = 0$ since $B_n \subset A \subset M_{m(t)-1}$. On the other hand, suppose that $m(t) = 0$. Then the subword can only have one letter $0$, and so $i(t+1) = i(t) + 2$. Hence, our goal is to show that $E_{B_n}[x_{i(t)+1}] = 0$.  Since $j_{i(t)+1} = 0$ and  $j_{i(t)} = j_{i(t)+2}= n$, condition (c') implies that $E_{B_n}[x_{i(t)+1}] = 0$, as desired.
	
	For $t = 1$, \dots, $k$, let $c_t = x_{i(t)}$ in $C_n \subset B_n \ootimes C_n$.  Let $a_t \in M_{n-1}$ be given by
	\[
	a_t = \begin{cases}
		x_1 \dots x_{i(1)-1}, & t = 0 \\
		x_{i(t)+1} \dots x_{i(t+1)-1}, & 0 < t < k \\
		x_{i(k)+1} \dots x_\ell, & t = k;
	\end{cases}
	\]
	for $t = 0$, if $i(1) = 1$, then $x_1 \dots x_{i(1)-1}$ is an empty product interpreted as $1$ (and we also have the symmetric statement for $t = k$).
	Then Lemma \ref{lem: amalgamated free product decomposition} applies to show that
	\[
	E_{M_{n-1}}[x_1 \dots x_\ell] = E_{M_{n-1}}[a_0 c_1 a_1 \dots c_k a_k] = 0.
	\]
	
	(2) If $m$ is the largest letter that appears in the word $w$, then $E_{M_{m-1}}[x_1 \dots x_\ell] = 0$ by (1), and since $A \subset M_{m-1}$ with expectation, this implies that $E_A[x_1 \dots x_\ell] = 0$.
\end{proof}

\begin{remark} \label{rem: relabeling indices}
	Note that condition (c) in Lemma \ref{lem: word decomposition} implies condition (c') in Lemma \ref{lem: word decomposition 2}, respectively. Hence, these two lemmas together allow one to determine $\omega_n(x_1 \dots x_\ell)$ for any product of elements from $A$ and $C_1$, \dots, $C_n$.  Furthermore, even though we constructed $M_n$ in Notation \ref{not: free product setup} by an iterated amalgamated free product, we would obtain the same result if we relabeled the indices $1$, \dots, $n$ to perform the amalgamated free products in a different order, i.e., the state on a product of elements from $A$ and $C_1$, \dots, $C_n$ would be the same for both constructions. This is because the conditions (a)-(c) in Lemma \ref{lem: word decomposition} are invariant under relabeling the indices $1$, \dots, $n$.
\end{remark}

\begin{lem} \label{lem: freeness of C}
$C_1$, \dots, $C_n$ are freely independent in $(M_n,\omega_n)$.  This induces an embedding of $(C,\psi) = (C_1,\psi_1) * \dots * (C_n,\psi_n)$ into $(M_n,\omega_n)$, which is with expectation.
\end{lem}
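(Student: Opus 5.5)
Freeness reduces to a single vanishing statement: for indices $j_1 \neq j_2 \neq \dots \neq j_\ell$ in $\{1,\dots,n\}$ (consecutive indices distinct, repetitions allowed) and elements $x_i \in C_{j_i}$ with $\psi_{j_i}(x_i) = 0$, we need $\omega_n(x_1 \cdots x_\ell) = 0$. Before invoking Lemma \ref{lem: word decomposition 2}, I will fix how the states restrict. The inclusion $C_j \subset B_j \ootimes C_j \subset M_{j+1} \subset M_n$ is state-preserving. In each amalgamated free product the original algebras sit with state-preserving conditional expectations, so $\omega_n$ restricted to $C_j$ equals $(\varphi_j \otimes \psi_j)|_{C_j} = \psi_j$. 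Hence the condition $\psi_{j_i}(x_i) = 0$ is exactly the centering condition needed for freeness.

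To apply Lemma \ref{lem: word decomposition 2}, I insert the unit of $A$ between consecutive letters. That is, I regard $x_1 \cdots x_\ell$ as the product
\[
x_1 \cdot 1 \cdot x_2 \cdot 1 \cdots 1 \cdot x_\ell,
\]
indexed by the alternating word $w = j_1\, 0\, j_2\, 0 \cdots 0\, j_\ell$ on $\{0,\dots,n\}$. Conditions (a) and (b) hold immediately. For (c'), the only requirement is that $E_{B_{j_{i-1}}}[1] = 0$ whenever the letters on either side of a $0$ coincide. This never applies here, because the letters adjacent to each inserted $0$ are $j_i$ and $j_{i+1}$, which are distinct by assumption. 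Since $w$ contains a nonzero letter, part (2) of Lemma \ref{lem: word decomposition 2} gives $E_A[x_1\cdots x_\ell] = 0$. Applying $\varphi$ yields $\omega_n(x_1\cdots x_\ell) = \varphi(E_A[x_1 \cdots x_\ell]) = 0$, which is freeness of $C_1,\dots,C_n$ with respect to $\omega_n$.

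The main obstacle is the existence of a state-preserving conditional expectation onto $D := (C_1 \cup \dots \cup C_n)''$, together with the identification of $D$ with the reduced free product. Freeness with faithful states shows that $D$ is the von Neumann algebra generated by free copies, and $L^2(D,\omega_n)$ is canonically the free product Hilbert space. Since $\omega_n|_D$ is faithful, the standard uniqueness of free products then gives $(D, \omega_n|_D) \cong (C,\psi)$. For the expectation, I will use Takesaki's criterion: it suffices to show that $\sigma^{\omega_n}_t(D) = D$ for all $t$. I will check this step by step through the construction. The modular group of an amalgamated free product with state-preserving expectations restricts on each factor to that factor's modular group. The modular group of $(B_j \ootimes C_j, \varphi_j \otimes \psi_j)$ is $\sigma^{\varphi_j} \otimes \sigma^{\psi_j}$, which leaves $C_j$ globally invariant. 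Therefore each $C_j$ is globally invariant under $\sigma^{\omega_n}$. It follows that $D$ is invariant, and Takesaki's theorem provides the state-preserving normal conditional expectation $M_n \to D$. The point needing care is that $\sigma^{\omega_n}$ restricts correctly through all the iterations. By induction this holds, since $\sigma^{\omega_{j+1}}$ restricts to $\sigma^{\omega_j}$ on $M_j$, by the same fact about amalgamated free products applied at each stage.
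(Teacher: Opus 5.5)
Your proposal is correct and follows essentially the paper's route. Freeness comes from Lemma~\ref{lem: word decomposition 2}; the paper applies it directly to the $0$-free alternating word $j_1\cdots j_\ell$, where (b) and (c') are vacuous, so padding with units of $A$ is harmless but unnecessary. The conditional expectation comes from compatibility of the modular groups through each amalgamated free product stage together with Takesaki's theorem, and your explicit checks that $\omega_n|_{C_j}=\psi_j$ and that $(C_1\cup\dots\cup C_n)''$ is the reduced free product supply details the paper cites as standard.
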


\begin{proof}
If $w = j_1 \dots j_\ell$ is an alternating word over $\{1,\dots,n\}$, and if $x_i \in C_{j_i}$ with $\psi_{j_i}(x_i) = 0$, then $x_1 \dots x_\ell$ satisfies the hypotheses of Lemma \ref{lem: word decomposition 2}; indeed, there is nothing to check for conditions (b) and (c) since the letter $0$ does not appear.  Therefore, $\omega_n(x_1 \dots x_\ell) = 0$.  Standard results show that there is a state-preserving normal embedding $\iota: (C,\psi) = (C_1, \psi_1) * \dots * (C_n,\psi_n) \to (M_n,\omega_n)$.

To show that the embedding is with expectation, it suffices to show that it commutes with the action of the modular group, i.e.\ $\sigma_t^{\omega_n} \circ \iota = \iota \circ \sigma_t^{\psi}$.  To see this, note that the embedding of $(C_j,\psi_j)$ into $(B_j,\varphi_j) \ootimes (C_j,\psi_j)$ commutes with the action of the modular group.  Then because the embeddings associated to the amalgamated free products also respect the modular group, we see that the embedding of $(C_j,\psi_j)$ into $(M_j,\omega_j)$ and hence into $(M_n,\omega_n)$ respects the modular group.  Meanwhile, the embedding of $(C_j,\psi_j)$ into the free product $(C,\psi)$ also respects the modular group.  Since $(C,\psi)$ is generated by the $C_j$'s, it follows that the embedding of $(C,\psi)$ into $(M_n,\omega_n)$ respects the modular group.
\end{proof}

\begin{lem} \label{lem: word decomposition 3}
Regard $C$ as a von Neumann subalgebra of $(M_n,\omega_n)$.  Let $H_0$ be the span of $K_w$ for alternating words $w$ on $\{0,\dots,n\}$ which start with the letter $0$.  Then for $\xi$, $\eta \in H_0$ and $x, y \in C$, we have
\begin{equation} \label{eq: inner product equality}
\omega_n((x\xi)^*(y\eta)) = \psi(x^*y) \omega_n(\xi^* \eta).
\end{equation}
\end{lem}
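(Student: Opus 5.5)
The plan is to reduce \eqref{eq: inner product equality} to a vanishing statement for a single reduced word of $C$ sandwiched between elements of $H_0$, and then to invoke Lemma \ref{lem: word decomposition 2}(2).

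\textbf{Step 1 (reductions).} Both sides of \eqref{eq: inner product equality} are sesquilinear in $(\xi,\eta)$, so it suffices to take $\xi \in K_w$ and $\eta \in K_{w'}$. Here $w = j_1\dots j_\ell$ and $w' = j_1'\dots j_m'$ are alternating words with $j_1 = j_1' = 0$, and we write $\xi = x_1\dots x_\ell$, $\eta = y_1\dots y_m$ with the factors satisfying (a)--(c) of Lemma \ref{lem: word decomposition}.

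Both sides are also sesquilinear in $(x,y)$ and normal, i.e.\ weakly continuous on bounded sets in each variable. By Kaplansky density it therefore suffices to take $x,y$ in the $*$-algebra spanned by $1$ and the reduced words $c_1\dots c_q$ of $C$, where $c_i \in C_{k_i}$, $\psi_{k_i}(c_i)=0$ and $k_1 \neq \dots \neq k_q$. This $*$-algebra is the algebraic free product, so $x^*y$ lies in it. Writing $z = x^*y - \psi(x^*y)1$, the element $z$ is a linear combination of nonempty reduced words of $C$.

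So everything reduces to showing
\[
\omega_n(\xi^* z \eta) = 0
\]
for a single nonempty reduced word $z = c_1\dots c_q$ in $C$ with $q \ge 1$.

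\textbf{Step 2 (concatenation).} Expand the product as
\[
\xi^* z \eta = x_\ell^* \cdots x_1^*\, c_1 \cdots c_q\, y_1 \cdots y_m .
\]
This is a product indexed by the concatenated word $j_\ell \dots j_1\, k_1 \dots k_q\, j_1' \dots j_m'$. The word is alternating: the reversal of $w$ is alternating and ends in $0$, then come the nonzero letters $k_1,\dots,k_q$, and $w'$ begins with $0$. Conditions (a) and (b) of Lemma \ref{lem: word decomposition 2} are inherited, since $\psi_j$ and $E_{B_j}$ commute with adjoints.

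\textbf{Step 3 (checking (c')), the main point.} We must check condition (c') at every interior $0$-letter whose two neighbours coincide. There are three kinds of such positions.

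\begin{enumerate}[(i)]
\item \emph{Interior positions of $\xi^*$.} The factor $x_i^*$, with $j_i = 0$ and $1<i<\ell$, has neighbours $j_{i+1}$ and $j_{i-1}$. Condition (c) for $\xi$ gives $E_{B_{j_{i+1}}}[x_i]=0$, and hence $E_{B_{j_{i+1}}}[x_i^*]=0$. This covers the case $j_{i+1}=j_{i-1}$. The analogous statement holds inside $\eta$.

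\item \emph{The junction between $\xi^*$ and $z$.} The factor $x_1^*$ has neighbours $j_2$ and $k_1$, provided $\ell \geq 2$. If $\ell = 1$, then $x_1^*$ is the first letter of the product and no condition is required. If $\ell\ge 2$, condition (c) for $\xi$ at $i=1$ gives $E_{B_{j_2}}[x_1]=0$, which is exactly what (c') needs when $j_2 = k_1$.

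\item \emph{The junction between $z$ and $\eta$.} Symmetrically, the factor $y_1$ has neighbours $k_q$ and $j_2'$, and condition (c) for $\eta$ gives $E_{B_{j_2'}}[y_1]=0$.
\end{enumerate}

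The essential observation is that condition (c) is a \emph{one-sided} condition at the start of each word, and this is precisely the side that becomes a junction with $z$. The hypothesis that the words of $H_0$ begin with $0$ is what makes this bookkeeping work. Both ends of the concatenated word are not subject to (c').

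\textbf{Step 4 (conclusion).} The concatenated word contains the nonzero letter $k_1$, because $q\ge1$. Lemma \ref{lem: word decomposition 2}(2) then gives $E_A[\xi^* z\eta]=0$. Since $\omega_n = \varphi\circ E_A$, it follows that $\omega_n(\xi^* z\eta)=0$.

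Returning to Step 1, we get
\[
\omega_n(\xi^* x^* y \eta) = \psi(x^*y)\,\omega_n(\xi^*\eta),
\]
which is \eqref{eq: inner product equality}.
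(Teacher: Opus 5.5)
Your proof is correct. It rests on the same key input as the paper, Lemma~\ref{lem: word decomposition 2}(2), but organizes the $C$-part of the argument differently.

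The paper takes $x = x_1\cdots x_m$ and $y = y_1\cdots y_{m'}$ to be reduced words in $C$ and inducts on $(m,m')$ through five cases. When exactly one word is empty, or the first letters differ, the left side vanishes by Lemma~\ref{lem: word decomposition 2} and the right side vanishes by freeness. When $k_1 = k_1'$, it splits off $x_1^*y_1 - \psi(x_1^*y_1)$, recurses on the shorter words, and then uses freeness to identify the resulting product of scalars with $\psi(x^*y)$.

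You instead use from the start the standard fact that the algebraic free product is $\C 1 \oplus \operatorname{span}\{\text{nonempty reduced words}\}$, with $\psi$ reading off the coefficient of $1$. So $x^*y - \psi(x^*y)1$ is already a combination of nonempty reduced words. You then need only one vanishing statement, $\omega_n(\xi^* z \eta) = 0$ for a nonempty reduced word $z$. This is essentially the paper's Case~2 with $z$ in place of $y$.

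In effect, the paper's induction re-derives inside the computation the reduction of $x^*y$ that you take as known. Your route is therefore shorter and avoids the case analysis. The paper's route stays entirely at the level of explicit words and freeness relations.

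Your write-up also makes explicit two points the paper leaves implicit. Step~3 spells out the junction bookkeeping for (c'), which the paper only asserts. It shows why the one-sided condition (c) at the initial letter $0$ of words in $H_0$ is exactly what is needed next to a letter of $C$. Step~1 spells out the density reduction from arbitrary $x,y \in C$ to the algebraic free product, which the paper passes over with ``by linearity''.
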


\begin{proof}
By linearity, it suffices to consider the following case:  Let $w_0 = j_1 \dots j_\ell$ and $w_0' = j_1' \dots j_{\ell'}'$ be words in $\{0,\dots,n\}$ that start with zero, and suppose that $\xi = \xi_1 \dots \xi_\ell$ and $\eta = \eta_1 \dots \eta_{\ell'}$ are products satisfying the conditions (a)-(c) of Lemma \ref{lem: word decomposition} with respect to $w_0$ and $w_0'$, respectively.  Let $w_1 = k_1 \dots k_m$ and $w_1' = k_1' \dots k_{m'}'$ be alternating words on $\{1,\dots,n\}$ and suppose that $x = x_1 \dots x_m$ and $y = y_1 \dots y_{m'}$ are products of elements in $\ker(\psi)$ with $x_i \in C_{k_i}$ and $y_i \in C_{k_i'}$.

Note that the products $x_1 \dots x_m$ and $y_1 \dots y_{m'}$ also satisfy conditions (a)-(c) of Lemma \ref{lem: word decomposition}.  In fact, the products $x_1 \dots x_m \xi_1 \dots \xi_\ell$ and $y_1 \dots y_{m'} \eta_1 \dots \eta_{\ell'}$ satisfy (a)-(c) as well.  Indeed, since $j_1 = j_1' = 0$, the words are alternating.  Moreover, since all the occurrences of $0$ are in the right part of the word, we see that conditions (b) and (c) are satisfied.

The proof of the claim proceeds by induction on $m$ and $m'$.

\textbf{Case 1:} Suppose that $m$ and $m'$ are both zero.  Then $x$ and $y$ are empty products, which by convention are $1$.  Hence, \eqref{eq: inner product equality} is trivial. 

\textbf{Case 2:} Suppose that $m = 0$ and $m' > 0$.  Since $w_0$ and $w_0'$ start with $0$ and $w_1'$ does not contain $0$, the word $w_0^* w_1' w_0' = j_{\ell} \dots j_1 k_1' \dots k_{m'}' j_1' \dots j_{\ell'}'$ is alternating.  Moreover, the product $\xi_\ell^* \dots \xi_1^* y_1 \dots y_{m'} \eta_1 \dots \eta_{\ell'}$ satisfies (a), (b), (c') of Lemma \ref{lem: word decomposition 2} for this word, using the fact that $\xi_1 \dots \xi_\ell$ and $y_1 \dots y_{m'} \eta_1 \dots \eta_{\ell'}$ satisfy (a)-(c).  Hence, by Lemma \ref{lem: word decomposition 2}, we have
\[
\omega_n(\xi^* y \eta) = \omega_n(\xi_\ell^* \dots \xi_1^* y_1 \dots y_{m'} \eta_1 \dots \eta_{\ell'}) = 0.
\]
On the other hand, by free independence of the $C_j$'s, $\psi(y) = 0$.  Thus, both sides of \eqref{eq: inner product equality} are zero.

\textbf{Case 3:} Suppose that $m > 0$ and $m' = 0$.  The argument is symmetrical to Case 2.

\textbf{Case 4:} Suppose that $m > 0$ and $m' > 0$ and $k_1 \neq k_1'$.  Then the word $w_0^* w_1^* w_1' w_0'$ is alternating and the product $\xi_\ell^* \dots \xi_1 x_m^* \dots x_1^* y_1 \dots y_{m'} \eta_1 \dots \eta_{\ell'}$ satisfies conditions (a), (b), (c') of Lemma \ref{lem: word decomposition 2}. Therefore, $\omega_n(\xi^*x^*y\eta) = 0$.  On the other hand, since $w_1^*w_1'$ is an alternating word on $\{1,\dots,n\}$, we have $\psi(x^*y) = 0$ using free independence of $C_1$, \dots, $C_n$.  Hence, both sides of \eqref{eq: inner product equality} are zero.

\textbf{Case 5:} Suppose that $m > 0$ and $m' > 0$ and $k_1 = k_1'$.  Then the word
\[
j_\ell \dots j_1 k_m \dots k_1 k_2' \dots k_{m'}' j_1' \dots j_{\ell'}'
\]
is alternating and the product
\[
\xi_\ell^* \dots \xi_1^* x_m^* \dots x_2^* (x_1^*y_1 - \psi(x_1^*y_1)) y_2 \dots y_{m'} \eta_1 \dots \eta_{\ell'}
\]
satisfies conditions (a), (b), (c') of Lemma \ref{lem: word decomposition 2}.  Therefore,
\[
\omega_n(\xi^* x_m^* \dots x_2^* (x_1^*y_1 - \psi(x_1^*y_1)) y_1 \dots y_{m'} \eta) = 0,
\]
or
\[
\omega_n(\xi^* x^* y \eta) = \psi(x_1^*y_1) \omega_n(\xi^* x_m^* \dots x_2^* y_2 \dots y_{m'} \eta).
\]
Using the induction hypothesis,
\[
\omega_n(\xi^* x^* y \eta) = \psi(x_1^*y_1) \psi(x_m^* \dots x_2^* y_2 \dots y_{m'}) \omega_n(\xi^*\eta).
\]
By applying the same equality in the case that $\xi = \eta = 1$ (or using free independence),
\[
\psi(x^*y) = \psi(x_1^*y_1) \psi(x_m^* \dots x_2^* y_2 \dots y_{m'}),
\]
and thus, we obtain \eqref{eq: inner product equality} as desired.
\end{proof}

\begin{proof}[Proof of Theorem \ref{thm: coarseness 1}]
In Lemma \ref{lem: freeness of C}, we established the freeness of $C_1$, \dots, $C_n$ and the embedding of $(C,\psi)$ into $(M_n,\omega_n)$ with expectation.

Let $H_0$ be as in Lemma \ref{lem: word decomposition 3} and let $H$ be the closure of $H_0$ in $L^2(M_n,\omega_n)$.  We claim that there is an isometric isomorphism
\begin{equation} \label{eq: constructing isometry}
	\Phi: L^2(C,\psi) \otimes_{\C} H \to L^2(M_n,\omega_n)
\end{equation}
given by $x \otimes \xi \mapsto x \xi$ for $x \in C$ and $\xi \in H$.  Lemma \ref{lem: word decomposition 3} shows that
\[
\ip{x \otimes \xi, y \otimes \eta}_{L^2(C,\psi) \otimes_{\C} H} = \ip{x\xi, y \eta}_{L^2(M_n,\omega_n)}
\]
when $\xi$, $\eta \in H_0$ and $x, y \in C$.  We then extend the isometry to all of $L^2(C,\psi) \otimes_{\C} H$ by continuity. That this map is surjective follows from Lemma \ref{lem: word decomposition}. Indeed, consider any alternating word $w = j_1\dots j_\ell$ on $\{0,\dots,n\}$ and product $x_1\dots x_\ell$ satisfying conditions (a)-(c) of Lemma \ref{lem: word decomposition} with respect to $w$. If the letter 0 appears in $w$, let $j_i$ be its first appearance. Then $x_1\dots x_{i-1} \in C$ and $x_i\dots x_\ell \in H_0$, so $x_1\dots x_\ell$ is in the range of the map above. If the letter 0 does not appear in $w$, then $x_1\dots x_\ell \in C$, $1 \in H_0$, and $x_1\dots x_\ell = (x_1\dots x_\ell) \cdot 1$ is in the range as well.

Next, we claim that $H$ is a right $A$-submodule of $L^2(M_n,\omega_n)$.  First, note that the inclusion of $A$ into $M_n$ admits state-preserving conditional expectation, since $(M_n,\omega_n)$ was formed from $(A,\varphi)$ by taking amalgamated free products.  Equivalently, the inclusion $A \to M_n$ respects the modular groups.  We claim that if $\xi \in H$ and $a \in A$, then $Ja^*J \xi \in H$.  It suffices to prove the claim when $\xi \in H_0$.  Moreover, by modular theory, analytic elements for the modular group are dense in $A$, so it suffices to prove the claim when $a$ is analytic.  This implies that $J a^* J$ is implemented as right multiplication by some $\tilde{a} \in A$, and so $Ja^*J \xi = \xi \tilde{a}$.  By linearity, suppose that $\xi$ is a product $\xi_1 \dots \xi_\ell$ associated to a word $w = j_1 \dots j_\ell$ satisfying the conditions of Lemma \ref{lem: word decomposition} with $j_1 = 0$.  If $j_\ell = 0$ as well, then $\xi_1 \dots \xi_{\ell-1} (\xi_\ell a)$ is also in $K_w$.  If $j_\ell \neq 0$, then $\xi_1 \dots \xi_\ell a$ is in $K_{w0}$ (and $w0$ also starts with $0$).  We thus have $\xi \tilde{a} \in H_0$ as desired.

Next, we show that $\Phi$ is a $C$-$A$-bimodule map.  Given $\zeta \in L^2(C,\psi) \otimes_{\C} H$ and $c \in C$ and $a \in A$, we claim that $\Phi((c \otimes Ja^*J) \zeta) = c Ja^*J \Phi(\zeta)$.  By linearity and density, it suffices to consider the case where $\zeta = x \otimes \xi$ where $x \in C$ and $\xi \in H_0$.  We can also assume that $a$ is analytic for the modular group, so $Ja^*J$ is implemented by right multiplication by some $\tilde{a} \in A$. Then
\begin{align*}
	\Phi((c \otimes Ja^*J)(x \otimes \xi)) &= \Phi(cx \otimes \xi \tilde{a}) \\
	&= cx \xi \tilde{a} \\
	&= c \Phi(x \otimes \xi) \tilde{a} \\
	&= c Ja^*J \Phi(x \otimes \xi).
\end{align*}
This completes the proof of the bimodule isomorphism.
\end{proof}

\begin{thm} \label{thm: coarseness 2}
	Consider the setup of Notation \ref{not: free product setup}.  Fix another von Neumann algebra $(D,\rho)$ and let
	\[
	(M,\omega) = (M_n, \omega_n) *_{(C,\psi)} [(C,\psi) \ootimes (D,\rho)].
	\]
	Then $L^2(M,\omega) \ominus L^2(M_n,\omega_n)$ is a coarse bimodule over $A$.
\end{thm}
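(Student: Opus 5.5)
Our plan is to combine Lemma \ref{lem: afp-bimod} for the amalgamated free product over $C$ with the bimodule isomorphism of Theorem \ref{thm: coarseness 1}. Write $\cH_1 = L^2(M_n)\ominus L^2(C)$ and $\cH_2 = L^2(C\ootimes D)\ominus L^2(C) \cong L^2(C)\otimes_{\C}(L^2(D)\ominus\C)$. By Lemma \ref{lem: afp-bimod} applied to $M = M_n *_C (C\ootimes D)$, we have, as $M_n$-$M_n$-bimodules,
\[
L^2(M)\ominus L^2(M_n) \cong \bigoplus_{k\ge 0} L^2(M_n)\otimes_C \cH_2 \otimes_C (\cH_1\otimes_C \cH_2)^{\otimes_C k}\otimes_C L^2(M_n).
\]
We then restrict both actions to $A \subset M_n$.

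The key point is the structure of $\cH_2$ as a $C$-$C$-bimodule. Because $C$ and $D$ are in tensor position, $\cH_2 \cong L^2(C)\otimes_{\C} (L^2(D)\ominus \C)$, with $C$ acting on the left and right of the $L^2(C)$ factor only. As a $C$-$C$-bimodule this is a multiple of the trivial bimodule $L^2(C)$, so we have $\cH_2 \cong L^2(C)\otimes_{\C} K$ with $K = L^2(D)\ominus\C$ carrying trivial actions. Hence each summand simplifies to
\[
L^2(M_n)\otimes_C (\cH_1)^{\otimes_C k}\otimes_C L^2(M_n)\otimes_{\C} K^{\otimes (k+1)}
\]
(with $\cH_1\otimes_C L^2(C)\otimes_C\cH_1 = \cH_1\otimes_C\cH_1$), regarded as an $A$-$A$-bimodule. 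So it suffices to show that, for each $k$, the $A$-$A$-bimodule $L^2(M_n)\otimes_C \mathcal{X}\otimes_C L^2(M_n)$ is coarse for any $C$-$C$-bimodule $\mathcal{X}$.

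Now we use Theorem \ref{thm: coarseness 1}: ${}_C L^2(M_n)_A \cong {}_C L^2(C)\otimes_{\C} H_A$. Taking the conjugate (via $J$) gives ${}_A L^2(M_n)_C \cong {}_A\overline{H}\otimes_{\C} L^2(C)_C$. Therefore
\[
{}_A L^2(M_n)\otimes_C\mathcal{X}\otimes_C L^2(M_n)_A \cong \overline{H}\otimes_{\C}\bigl(L^2(C)\otimes_C\mathcal{X}\otimes_C L^2(C)\bigr)\otimes_{\C} H \cong \overline{H}\otimes_{\C}\mathcal{X}\otimes_{\C}H .
\]
Here $A$ acts only on the left of $\overline{H}$ and on the right of $H$. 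A left $A$-module (with separable $A$ and $H$) embeds in a multiple of $L^2(A)$, and similarly on the right. Thus this bimodule is contained in a multiple of $L^2(A)\otimes_{\C} L^2(A)$, i.e.\ it is coarse. Summing over $k$ gives coarseness of $L^2(M)\ominus L^2(M_n)$ over $A$.

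The main obstacle is making the Connes-fusion manipulations rigorous in the non-tracial setting. Specifically, we must justify that
\begin{itemize}
\item the isomorphism $L^2(C)\otimes_{\C}H \cong L^2(M_n)$ passes through the fusion $\otimes_C$ to give $(L^2(C)\otimes_{\C}H)\otimes_C\cdots$ naturally, and
\item the conjugate bimodule is correctly identified with ${}_A L^2(M_n)_C$.
\end{itemize}
We would handle this by working with the concrete description of $L^2(M_1)\otimes_{M_2}K$ from the preliminaries and checking the inner products on dense spans using analytic elements, as in the proof of Theorem \ref{thm: coarseness 1}. Alternatively, we would verify the fusion identity $L^2(C)\otimes_C \mathcal{X}\cong\mathcal{X}$ directly by the inner-product formula $\ip{x\otimes\xi,y\otimes\eta}=\ip{\xi,E_C[x^*y]\eta}$. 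The remaining ingredient, that every normal left $A$-module is contained in $L^2(A)^{\oplus\infty}$, is standard.
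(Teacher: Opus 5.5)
Your proposal is correct and follows essentially the paper's route: decompose $L^2(M)\ominus L^2(M_n)$ via Lemma \ref{lem: afp-bimod} over $C$, then use the $C$-$A$ isomorphism ${}_C L^2(M_n)_A \cong L^2(C)\otimes_{\C} H$ from Theorem \ref{thm: coarseness 1} to absorb the fusion over $C$. The paper is more economical, using only the right-hand copy of $L^2(M_n)$: any $\mathcal{Y}\otimes_C(L^2(C)\otimes_{\C}H)\cong \mathcal{Y}\otimes_{\C}H$ is already coarse over $A$, so your conjugation step at the left end and your explicit simplification of $L^2(C\ootimes D)\ominus L^2(C)$ are valid but unnecessary.
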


\begin{proof}
	Using the bimodule decomposition for amalgamated free products (Lemma \ref{lem: afp-bimod}),
	\begin{multline*}
		L^2(M,\omega) \ominus L^2(M_n,\omega_n) \cong \bigoplus_{k \geq 0} L^2(M_n, \omega_n) \otimes_C \cH_1 \otimes_C (\cH_2 \otimes_C \cH_1)^{\otimes_C k} \otimes_C L^2(M_n,\omega_n)
	\end{multline*}
	as a bimodule over $M_n$, where
    \begin{align*}
        \cH_1 &= L^2[(C,\psi) \ootimes (D,\rho)] \ominus L^2(C,\psi)\\
        \cH_2 &= L^2(M_n,\omega_n) \ominus L^2(C,\psi).
    \end{align*}
    Hence, this also holds as a bimodule over $A$. Since $L^2(M_n,\omega_n)$ is a coarse $C$-$A$-bimodule by Theorem \ref{thm: coarseness 1} and this $C$-$A$-bimodule appears at the right end, we obtain that $L^2(M,\omega) \ominus L^2(M_n,\omega_n)$ is a coarse bimodule over $A$ as desired.
\end{proof}

\section{Exotic full $\mathrm{II}_1$ factors with no property (T) phenomena} \label{sec: exotic factor construction}

Recall the definition of the 3-handle construction, introduced in \cite{gao20253handleconstructionii1factors}. For a $\mathrm{II}_1$ factor $N$ and $u,v\in \mathcal{U}(N)$, define $\Omega_1(N,u)= N*_{W^*(u)}\left(W^*(u)\ootimes L(\mathbb{Z})\right)$  and denote the Haar unitary  generating the $L(\mathbb{Z})$ on the right as $\omega(u)$. Define  $\Omega_2(N,u,v)= N*_{W^*(u,v)}(W^*(u,v)\ootimes 
 L(\mathbb{Z}))$. 
Let $\sigma=(\sigma_1,\sigma_2):\mathbb N\rightarrow\mathbb N\times\mathbb N$ be a bijection such that $\sigma_1(n)\leq n$, for every $n\in\mathbb N$. Assume that $M_1,\ldots,M_n$ have been constructed, for some $n\in\mathbb N$.
Let $\{(u_1^{n,k},u_2^{n,k})\}_{k\in\mathbb N}\subset \mathcal{U}(M_n)$ be a $\|\cdot\|_2$-dense sequence.
We define $$M_{n+1}:=\Omega_2(\Omega_1(\Omega_1(M_n,u_1^{\sigma(n)}),u_2^{\sigma(n)}), \omega(u_1^{\sigma(n)}), \omega(u_2^{\sigma(n)})).$$ Denote by $\Theta(M_1)$ the inductive limit of the $M_i$'s constructed above. Our methods allow us to prove fullness of such factors without any need to appeal to property $(T)$ phenomena. 

\begin{thm}\label{fullness of 3 handle}
    If $M_1$ is an arbitrary $\mathrm{II}_1$ factor, $\Theta(M_1)$ is a full $\mathrm{II}_1$ factor. 
\end{thm}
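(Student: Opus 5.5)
The plan is to first prove that $M_2$ is already a full $\mathrm{II}_1$ factor, whatever $M_1$ is. After that, weak coarseness of $M_2 \subset \Theta(M_1)$ lets me transfer fullness via Proposition \ref{prop: full-and-coarse}. I cannot start the transfer at $M_1$, since $M_1$ is arbitrary and may be amenable or have property Gamma.

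\emph{Step 1: weak coarseness of each inductive step.} Fix $n$ and write one stage as
\[
N_0=M_n \subset N_1 \subset N_2 \subset N=M_{n+1},
\]
with unitaries $u_1,u_2\in N_0$ and the new Haar unitaries $v_1=\omega(u_1)$, $v_2=\omega(u_2)$. The first two inclusions are amalgamated free products over the abelian, hence amenable, algebras $\{u_j\}''$. By Proposition \ref{prop: amen-afp-wk-coarse}, $L^2(N_1)\ominus L^2(N_0)$ is weakly coarse over $N_0$ and $L^2(N_2)\ominus L^2(N_1)$ is weakly coarse over $N_1$. The latter is then weakly coarse over $N_0$, by restriction as in the proof of Proposition \ref{prop: lim-of-coarse}.

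For the last inclusion I use Notation \ref{not: free product setup} with $n=2$, $A=N_0$, $B_j=\{u_j\}''$ and $C_j=\{v_j\}''$. Here $C=\{v_1,v_2\}''\cong L(\mathbb F_2)$ by Theorem \ref{thm: coarseness 1}, and Theorem \ref{thm: coarseness 2} gives that $L^2(N)\ominus L^2(N_2)$ is coarse over $N_0$. Summing the three pieces, $N_0\subset N$ is weakly coarse. Proposition \ref{prop: lim-of-coarse} then shows that $M_k\subset\Theta(M_1)$ is weakly coarse for every $k$.

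\emph{Step 2: $M_{n+1}$ is full for every $n$, in particular $M_2$.} This is the main obstacle, and I would argue through the copy $C\cong L(\mathbb F_2)$ rather than through $A$. Keep the notation of Step 1 and let $d$ be the Haar unitary generating the last $L(\mathbb Z)$, so that $d$ commutes with $C$.

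First I decompose $L^2(N)\ominus L^2(N_2)$ as a $C$-$C$-bimodule using Lemma \ref{lem: afp-bimod}. Its leftmost factor $L^2(N_2)$ is coarse as a left $C$-module by Theorem \ref{thm: coarseness 1}. By the relabeling and adjoint symmetry of Remark \ref{rem: relabeling indices}, i.e.\ applying $J$ or reversing words, the rightmost factor is coarse as a right $C$-module. Hence the whole bimodule is coarse over $C$.

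Now let $(x_k)$ be an asymptotically central sequence in $N$. Since $C$ is nonamenable, the coarse $C$-bimodule above has no almost central vectors. Therefore $x_k-E_{N_2}(x_k)\to0$, and $y_k=E_{N_2}(x_k)$ is asymptotically central.

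Next I use $d$. For $z\in N_2\ominus C$, the element $dzd^*$ is a reduced word and so is orthogonal to $L^2(N_2)$. Hence
\[
\|dy_kd^*-y_k\|_2^2\ \ge\ \|y_k-E_C(y_k)\|_2^2 .
\]
Asymptotic commutation with $d$ then forces $y_k-E_C(y_k)\to0$. This leaves an asymptotically central sequence of $C=L(\mathbb F_2)$, which is full, so $x_k$ is asymptotically scalar. The delicate point I would check carefully is the right-coarseness claim for the rightmost factor, i.e.\ the mirror version of Theorem \ref{thm: coarseness 1}.

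\emph{Step 3: conclusion.} By Step 2, $M_2$ is a full factor. By Step 1, $L^2(\Theta(M_1))\ominus L^2(M_2)$ is weakly coarse over $M_2$. Proposition \ref{prop: full-and-coarse} therefore yields that $\Theta(M_1)$ is a full factor. It is a $\mathrm{II}_1$ factor because every $M_k$ is a $\mathrm{II}_1$ factor and the traces are compatible along the inductive limit.
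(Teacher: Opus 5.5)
Your Steps 1 and 3 are the paper's argument. The paper likewise gets weak coarseness of each stage from Proposition~\ref{prop: amen-afp-wk-coarse} and Theorem~\ref{thm: coarseness 2}, passes to the limit with Proposition~\ref{prop: lim-of-coarse}, and transfers fullness with Proposition~\ref{prop: full-and-coarse} once $M_2$ is full. You also correctly place the crux in the fullness of $M_2$ via $C\cong L(\F_2)$.

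\textbf{The gap.} The first half of your Step 2 is false: ${}_C[L^2(N)\ominus L^2(N_2)]_C$ is not coarse, not even weakly coarse. The Haar unitary $d$ commutes with $C$, and $E_{N_2}(d)=E_C(d)=\rho(d)=0$. So $\hat d$ is a nonzero $C$-central vector in $L^2(N)\ominus L^2(N_2)$, and since $C$ is nonamenable this rules out weak coarseness. Structurally, look at the $k=0$ summand $L^2(N_2)\otimes_C\cH_1\otimes_C L^2(N_2)$. Both outer factors contain the trivial $C$-bimodule $L^2(C)$. The middle factor $\cH_1=L^2(C)\otimes(L^2(D)\ominus\C)$ is a multiple of the trivial $C$-bimodule, because $D$ commutes with $C$.

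Theorem~\ref{thm: coarseness 1} (and its mirror image) describes the $C$-$A$ (resp.\ $A$-$C$) structure of $L^2(N_2)$. That is exactly why the proof of Theorem~\ref{thm: coarseness 2} works when $A$ acts on the outside. It gives nothing when $C$ acts on both sides, and ``coarse as a left $C$-module'' has no content. So asymptotic commutation with $C$ alone cannot force $x_k-E_{N_2}(x_k)\to 0$. The constant sequence $x_k=d$ already breaks the implication as you use it.

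\textbf{What is missing.} You need an argument that uses asymptotic commutation with an element of $N_2\ominus C$ as well as with $D$. It must control the component of $x_k$ lying in $L^2(N)\ominus L^2(N_2)$. Your estimate $\|dyd^*-y\|_2^2=2\|y-E_C(y)\|_2^2$ for $y\in N_2$ is correct, but it only handles the part already inside $N_2$. The rest needs a genuine ping-pong or spectral-gap argument in the amalgamated free product $N=N_2*_C(C\ootimes D)$.

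The paper gets this from \cite[Theorem~6.1]{Ioa15}. Take a trace-zero unitary $u\in M_1$, so that $E_C(u)=\tau(u)=0$ by Theorem~\ref{thm: coarseness 1}. Take unitaries $v,w\in D$ with $E_C(v)=E_C(w)=E_C(w^*v)=0$. Ioana's theorem then gives $M_2'\cap M_2^\cU\subset C^\cU$. Hence $M_2'\cap M_2^\cU\subset C'\cap C^\cU=\C 1$ by fullness of $L(\F_2)$. If you replace your coarseness-over-$C$ claim with this, the rest of your outline goes through.
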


\begin{proof}
    Since $W^*(u_i^{\sigma(n)})$ is always amenable, by Proposition \ref{prop: amen-afp-wk-coarse}, we get that for all $n,$ the inclusions $M_n\subset \Omega_1(M_n,u_1^{\sigma(n)})$ and $\Omega_1(M_n,u_1^{\sigma(n)}) \subset \Omega_1(\Omega_1(M_n,u_1^{\sigma(n)}),u_2^{\sigma(n)})$ are weakly coarse.
    By Theorem \ref{thm: coarseness 2}, $\Omega_1(\Omega_1(M_n,u_1^{\sigma(n)}),u_2^{\sigma(n)}) \subset M_{n+1}$ is weakly coarse for all $n$ (where $A = M_n$, $C = W^*(\omega(u_1^{\sigma(n)}),\omega(u_2^{\sigma(n)})),$ and $D \cong L\Z$). By Proposition \ref{prop: lim-of-coarse}, we get that $M_k\subset \Theta(M_1)$ is weakly coarse as well, for all $k$. It now suffices to verify that $M_2$ is full in order to apply Proposition \ref{prop: full-and-coarse} to conclude that $\Theta(M_1)$ is full. We note that if $u\in M_1$, then $E_C(u) = \tau(u)$ where $C = W^*(\omega(u_1^{\sigma(1)}),\omega(u_2^{\sigma(1)}))\cong L\F_2$ per Theorem \ref{thm: coarseness 1}. Taking $D \cong L\Z$ in Theorem \ref{thm: coarseness 2}, we can find $v,w$ unitaries in $D$ such that $E_C(v)=E_C(w) = E_C(w^*v) = 0.$ By \cite[Theorem~6.1]{Ioa15}, we deduce that $M_2'\cap M_2^\cU \subset M_2' \cap C^\cU \subset C'\cap C^\cU = \C1$ as $L\F_2$ is full.
\end{proof}

\begin{thm}
    If no corner of $M_1$ contains a diffuse property (T) subalgebra, then $\Theta(M_1)$ contains no diffuse property (T) subalgebra. In particular $\Theta(L(\mathbb{F}_2))$ does not contain any diffuse property (T) subalgebra. 
\end{thm}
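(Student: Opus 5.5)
Suppose $Q \subset \Theta(M_1)$ is a diffuse subalgebra with property (T). The plan is to push $Q$ step by step back into $M_1$ using Popa's intertwining-by-bimodules, and then contradict the hypothesis on $M_1$. First I reduce to a finite stage. Property (T) of $Q$ combined with the $\|\cdot\|_2$-approximation of $\Theta(M_1)$ by the increasing union $\bigcup_n M_n$ gives an intertwining into some $M_n$. Concretely, the maps $E_{M_n}$ are subunital trace-preserving completely positive maps converging pointwise to the identity, so rigidity yields a uniform bound $\|E_{M_n}(u)-u\|_2\le 1/2$ for all $u\in\mathcal U(Q)$ once $n$ is large. From this a standard argument gives $Q \prec_{\Theta(M_1)} M_n$. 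Hence some corner $pQp$ (amplified) embeds into $qM_nq$, and after a unitary conjugation a diffuse property (T) algebra $Q_1 \subset qM_nq$ appears, because property (T) passes to corners and to images under such embeddings.

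Next I descend one stage at a time, from $M_{k+1}$ to $M_k$. Each step $M_k\subset M_{k+1}$ consists of three amalgamated free products: two over the abelian algebra $W^*(u)$ and one over $W^*(v_1,v_2)\cong L(\mathbb F_2)$ (Theorem \ref{thm: coarseness 1}). Write each stage as $P *_B (B\ootimes L\Z)$ and take a diffuse property (T) subalgebra $Q_1$ of a corner of $P*_B(B\ootimes L\Z)$. I will apply the Ioana--Peterson--Popa deformation for amalgamated free products \cite{IPP}. By (T) the malleable deformation converges uniformly on $Q_1$, so $Q_1\prec P$ or $Q_1\prec B\ootimes L\Z$. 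The second factor $B\ootimes L\Z$ has the Haagerup property, since $B$ is abelian or $L(\mathbb F_2)$, and so it cannot contain a diffuse (T) corner \cite{ConnesJones}. Also $B\subset P$. Hence in either case we get $Q_1\prec P$. Transporting along the intertwining gives again a diffuse (T) subalgebra in a corner of $P$. Iterating through the three sub-steps and then down from $M_n$ to $M_1$, we obtain a diffuse (T) subalgebra of a corner of $M_1$, which is a contradiction.

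The main obstacle is the IPP dichotomy when the amalgam $B$ is not amenable, as in $L(\mathbb F_2)$ at the $\Omega_2$ step. I expect the \cite{IPP} theorem (rigid subalgebra $\Rightarrow$ intertwines into a factor, or its normalizer does) to work without amenability for a deformation that is rigid on $Q_1$. It must, however, be checked carefully that the conclusion "$Q_1\prec P$ or $Q_1\prec B\ootimes L\Z$" holds for a corner of an amplification, and not just for $Q_1$ itself. A second concern is making sure each intertwining can be converted into an honest diffuse (T) subalgebra of a corner, since one must restrict to the image of a $*$-homomorphism $\theta: pQ_1p\to qPq$, which is a quotient of a (T) algebra. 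Diffuseness of the image needs $Q_1\nprec$ finite-dimensional algebras, which holds because $Q_1$ is diffuse. The second sentence of the statement then follows, because $L(\mathbb F_2)$ and its corners (Haagerup property) contain no diffuse (T) subalgebras \cite{ConnesJones}.
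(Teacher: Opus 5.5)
Your proposal is correct and follows essentially the same route as the paper: you reduce to a finite stage $M_n$ via uniform convergence of $E_{M_n}$ on the property (T) algebra, then descend through the three amalgamated free products of each step using the IPP/Popa--Vaes rigidity dichotomy, and you exclude the side $B\ootimes L\Z$ (abelian, or $L(\mathbb{F}_2)\ootimes L\Z$) by the Haagerup property. Your worry about the non-amenable amalgam is unfounded, because the rigidity theorem of \cite{IPP} (see also \cite[Theorem~5.4]{PV09}) needs no amenability of the amalgam; also, the remark that $B\subset P$ is not needed, since the case $Q_1\prec B\ootimes L\Z$ is simply ruled out.
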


\begin{proof}
    Suppose towards a contradiction that $N\subset \Theta(M_1)$ has property (T). Since $\Theta(M_1)$ is an inductive limit of $M_n$, by considering that the expectations from $\Theta(M_1)$ onto $M_n$ converge pointwise to the identity and that $N$ has (T), we deduce that these expectations converge uniformly on $N.$ This in turn implies that a corner of $N$ embeds into a corner of $M_n$ for some $n$ \cite[Theorem~4.6]{christensen1979} (see also \cite{PopaCorr} and \cite[Theorem~3.26]{GKEPTconjugacy2025}). Since all corners of $N$ have property (T), all deformations converge uniformly on $N$. By \cite[Theorem~5.4]{PV09} (c.f. \cite[Theorem~1.1]{IPP}), since $M_n$ is an amalgamated free product, corners of $N$ must intertwine into one of the two factors in the amalgamation. In each step where we obtain $M_n$ from $M_{n-1}$, the right side of the amalgamation is either $L\Z \ootimes L\Z$ or $L\F_2\ootimes L\Z$, which both have Haagerup's property and therefore do not admit diffuse property (T) subalgebras. Therefore we deduce that a corner of $N$ intertwines into $M_{n-1}$, implying that a corner of $N$ embeds into a corner of $M_{n-1}$. Continuing recursively, we deduce that a corner of $N$ embeds into a corner of $M_1$, a contradiction.
\end{proof}

\section{A remark on constructing exotic full $\mathrm{III}_1$ factors} \label{sec: type III setting}

We are able to prove the following result using our machinery:

\begin{prop}
    There exists a full $\mathrm{III}_1$ factor $M$ that satisfies the following property: for every pair of unitaries $u,v\in \mathcal{U}(M)$ such that $\{u\}'',\{v\}''$ admit state preserving expectations, then there exists Haar unitaries $w_1,w_2,w_3\in M^\mathcal{U}$, such that $[u,w_1]=[w_1,w_2]=[w_2,w_3]= [w_3,v]=0$. 
\end{prop}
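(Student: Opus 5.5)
We will mimic the iterated $3$-handle construction of \S\ref{sec: exotic factor construction}, now in the state-preserving setting. We start from a full $\mathrm{III}_1$ factor $(M_1,\varphi_1)$ with faithful normal state; for instance a free Araki--Woods factor, or $L(\F_2)\ootimes R_\infty$ if that is full. Inductively, given $(M_n,\varphi_n)$, we fix a countable family of pairs $(u^{n,k}_1,u^{n,k}_2)$ of unitaries in $M_n$ such that each $\{u^{n,k}_j\}''$ admits a $\varphi_n$-preserving expectation. The family should be dense, in $\|\cdot\|_{\varphi_n}^\sharp$, among all such pairs. We use a bijection $\sigma$ with $\sigma_1(n)\le n$ as before. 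We then set $B_j=\{u_j^{\sigma(n)}\}''$ and $C_j=L(\Z)$ with the Haar state, and form $N_2$ as in Notation \ref{not: free product setup} with $A=M_n$ and $n=2$. Next we let
\[
M_{n+1}=N_2*_{C}[C\ootimes L(\Z)],
\]
where $C=C_1*C_2\cong L(\F_2)$ is embedded with expectation by Theorem \ref{thm: coarseness 1}. All states are the amalgamated free product states. The inductive limit $M$ carries the limit state, and each $M_n\subset M$ is with expectation.

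\textbf{Fullness.} By Proposition \ref{prop: amen-afp-wk-coarse}, since each $B_j$ is abelian and hence amenable with expectation, the inclusions $M_n\subset N_1\subset N_2$ are weakly coarse. By Theorem \ref{thm: coarseness 2} (with $A=M_n$, $D=L(\Z)$), the bimodule $L^2(M_{n+1})\ominus L^2(N_2)$ is coarse over $M_n$. Proposition \ref{prop: lim-of-coarse} then gives that $M_1\subset M$ is weakly coarse, and Proposition \ref{prop: full-and-coarse} gives that $M$ is full. We must check that the proof of Proposition \ref{prop: lim-of-coarse} works in the non-tracial setting. The needed fact is that $L^2(M_k)$ is a direct sum of copies of $L^2(M_0)$ as a one-sided module, which holds because each inclusion is with expectation. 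To see that $M$ is a $\mathrm{III}_1$ factor, we note that factoriality follows from fullness via the same relative-commutant argument. For the type, we show the modular group behaves as for $M_1$. Using the expectation $E_{M_1}$ and the fact that $L^2(M)\ominus L^2(M_1)$ is weakly coarse, the centralizer and Connes' $T$/$S$-invariant argument should give $S(M)\supseteq S(M_1)=[0,\infty)$. Alternatively, we invoke Connes' result that a full factor's $\tau$-invariant is computed from $\sigma^\varphi$ and reduce to $M_1$. This bookkeeping is the step we expect to need the most care.

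\textbf{Commutation paths.} Let $u,v\in\mathcal U(M)$ with $\{u\}'',\{v\}''$ with expectation. First approximate $u,v$ by unitaries $u',v'$ in some $M_m$ whose generated algebras are with expectation in $M_m$. To get these, apply $E_{M_m}$ and take polar parts, or use that $\sigma^\varphi$-invariance of $\{u\}''$ is preserved under the approximations. This approximation step is the main obstacle: density in the state-preserving class is not automatic, and we would try working with the $\sigma^\varphi$-fixed or analytic unitaries. Granting it, choose $n_k$ with $\sigma(n_k)$ indexing pairs that converge to $(u,v)$. Set $w_1^{(k)}=\omega(u_1^{\sigma(n_k)})$, let $w_2^{(k)}$ be the generator of the last $L(\Z)$, and set $w_3^{(k)}=\omega(u_2^{\sigma(n_k)})$. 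These are Haar unitaries, and they are exactly commuting: $w_1$ commutes with $u_1^{\sigma(n_k)}$ by the tensor position, $w_2$ commutes with both $w_1$ and $w_3$, and $w_3$ commutes with $u_2^{\sigma(n_k)}$. All of them lie in centralizers of the state, since the modular group acts trivially on the Haar $L(\Z)$'s. Hence the sequences define elements of $\mathcal M^{\mathcal U}$, and their classes $w_1,w_2,w_3\in M^{\mathcal U}$ are Haar unitaries. The commutators $[u,w_1]$ and $[w_3,v]$ vanish in $M^{\mathcal U}$ because $\|u-u_1^{\sigma(n_k)}\|^\sharp\to0$ and the $w$'s have norm $1$.
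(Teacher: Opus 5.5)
\textbf{The type $\mathrm{III}_1$ claim is not proved.} Your construction, your fullness argument and your commutation paths agree with the paper. The fullness argument runs: Proposition~\ref{prop: amen-afp-wk-coarse} for the first two stages, Theorem~\ref{thm: coarseness 2} with $A=M_n$ for the third, then Propositions~\ref{prop: lim-of-coarse} and~\ref{prop: full-and-coarse}. For type $\mathrm{III}_1$, however, you give only a heuristic, and the principle behind it is false. A $\varphi$-preserving expectation $E_{M_1}$ together with weak coarseness of $L^2(M)\ominus L^2(M_1)$ does not force $S(M)\supseteq S(M_1)$.

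Here is a counterexample. Take a faithful normal state $\varphi$ on $R_\infty$ and $t_0\neq 0$, and let $P=R_\infty\rtimes_{\sigma^\varphi_{t_0}}\mathbb{Z}$.
\begin{itemize}
\item $P$ is a factor, since $T(R_\infty)=\{0\}$ makes every nontrivial power of $\sigma^\varphi_{t_0}$ outer.
\item $P$ contains $R_\infty$ with expectation.
\item $L^2(P)\ominus L^2(R_\infty)$ is weakly coarse, because $R_\infty$ is amenable.
\item Yet $\sigma^{\hat\varphi}_{t_0}=\mathrm{Ad}(u)$ is inner, so $t_0\in T(P)$ and $P$ is not of type $\mathrm{III}_1$.
\end{itemize}
Enlarging an algebra with expectation can lower the type. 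You therefore need extra control, for instance on relative commutants, and neither your $S$-invariant remark nor the $\tau$-invariant supplies it.

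\textbf{How the paper gets this control.} It chooses the input algebra carefully.
\begin{itemize}
\item It takes $M_1=M_0*L(\mathbb{Z})$, so the free copy $A$ of $L(\mathbb{Z})$ is a masa of $M_1$ lying in the centralizer of the state.
\item It amalgamates only over finite-dimensional abelian subalgebras of the centralizer.
\item Using \cite[Proposition 3.3]{Ueda_2013}, it shows $A$ stays maximal abelian at every stage. The diffuse $A$ cannot intertwine into the finite-dimensional amalgams of the first two stages. At the third stage it cannot intertwine into $C\cong L(\mathbb{F}_2)$, because $L^2$ of the second-stage algebra is a coarse $C$-$M_n$-bimodule (Theorem~\ref{thm: coarseness 1}).
\item Then \cite[Proposition 2.7]{Ando_2019} transfers type $\mathrm{III}_1$ from $M_1$ to $\Theta(M_1)$ through this centralizer masa.
\end{itemize}

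\textbf{A possible repair of your route.} Your own idea can likely be rescued using fullness, hence non-amenability, of $M_1$. Suppose $\sigma^\varphi_T=\mathrm{Ad}(u)$ on $M$. Then $uL^2(M_1)$ is an irreducible $M_1$-bimodule, isomorphic to $L^2(M_1)$ with left action twisted by $\sigma^\varphi_{-T}$. It cannot embed in the weakly coarse complement, so $u\in M_1$. Hence $T(M)\subset T(M_1)=\{0\}$. Combined with Connes' result that full factors are never of type $\mathrm{III}_0$, this would give type $\mathrm{III}_1$. Either way, some argument of this kind has to be written down.

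\textbf{Two smaller points.}
\begin{itemize}
\item $L(\mathbb{F}_2)\ootimes R_\infty$ is not full, so it cannot serve as your starting algebra.
\item Your approximation ``obstacle'' disappears once you note the following. An abelian algebra $\{u\}''$ admits a $\varphi$-preserving expectation if and only if $u$ lies in the centralizer $M_\varphi$: by Takesaki, $\sigma^\varphi$ restricts to the trivial modular group of the tracial state $\varphi|_{\{u\}''}$. Consequently $E_{M_m}(u)\in (M_m)_\varphi$, and you can replace it by a nearby unitary, even one with finite spectrum, in the finite algebra $(M_m)_\varphi$.
\end{itemize}
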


\begin{proof}
    Let $M_0$ be any full I$\mathrm{II}_1$ factor and $M_1=M_0*L(\mathbb{Z})$. Then we have $M_1$ is a full I$\mathrm{II}_1$ factor admitting a masa $A$ (the free copy of $L(\mathbb{Z})$) inside the centralizer of the free product state. Now we enumerate countable $\|.\|_2$-dense subsets of pairs of finite dimensional unitaries $\{(u_1^{n,k},u_2^{n,k})\}_{k\in\mathbb N}\subset \mathcal{U}(M_n)$ under expectation in $M_n$. Then following the 3-handle construction amalgamating over the corresponding finite dimensional abelian subgalgebras under expectation, we obtain $\Theta(M_1)$. Note that by construction, we have that $A$ is also a masa in $\Theta(M_1)$. This follows from applying \cite[Proposition 3.3]{Ueda_2013} and using the fact that $A$ cannot intertwine into the amalgam in the each of the three stages of each step. Indeed, in the first two stages the amalgams are finite dimensional (recall $A$ is diffuse).  In the third stage $L^2(\Omega_1(\Omega_1(M_n,u_1^{\sigma(n)}),u_2^{\sigma(n)}))$ is a coarse bimodule over $M_n$ and $L(\mathbb{F}_2)$ by Theorem \ref{thm: coarseness 2}, and hence a diffuse subalgebra of $M_n$ cannot intertwine into $L(\mathbb{F}_2)$, which is the amalgam for the third step, so we can again apply \cite[Proposition 3.3]{Ueda_2013}.  Now applying \cite[Proposition 2.7]{Ando_2019}, we see that this is a type $\mathrm{III}_1$ factor since $M_1\subset \Theta(M_1)$ is with expectation and additionally $A$ is a masa under expectation in $M_1$ and is also a masa in $\Theta(M_1)$. This factor is additionally full by Theorem \ref{thm: coarseness 2} and Proposition \ref{prop: lim-of-coarse}. 
\end{proof}

One may conjecture that the above factor $M$ is not elementarily equivalent to the free Araki Woods factor, via results from \cite{hayes2024generalsolidityphenomenaanticoarse}, which show lack of sequential commutation. Unfortunately, this cannot be immediately deduced because of the following subtle reason. Our construction above does not connect arbitrary pairs of unitaries, but rather only unitaries with state-preserving expectation.  One could attempt to perform the construction merely assuming \emph{some} faithful normal expectation onto the algebra generated by each unitary (which would always exist if this is finite-dimensional), and in fact some of the computations in \S \ref{sec: bimodule computation} would go through, but the problem is that Lemma \ref{lem: freeness of C} crucially uses that the expectations from $M_n$ onto the $C_j$'s are consistent with the same state, which would not be the case if the expectations from $A$ onto $B_j$ are not consistent with the same state.  Without Lemma \ref{lem: freeness of C}, we would not have a conditional expectation onto $C$ to finish the construction in Theorem \ref{thm: coarseness 2}. If it was the case that for any $u\in \mathcal{U}(M)$ and every $\epsilon>0$, there exists a unitary $v\in M$, such that $\|u-v\|_2<\epsilon$ and $v$ is contained in an amenable subalgebra of $M$ with state preserving expectation, then we can conclude the result by using these amenable subalgebras as the $B_j$'s in \S \ref{sec: bimodule computation}. However this seems far from possible in general for full $\mathrm{III}_1$ factors.

\bibliographystyle{amsalpha}
\bibliography{inneramen}

\end{document}